\newtheorem{theorem}{Theorem}[section]
\newtheorem{claim}[theorem]{Claim}
\newtheorem{lemma}[theorem]{Lemma}
\newtheorem{corollary}[theorem]{Corollary}
\theoremstyle{definition}
\newtheorem{definition}[theorem]{Definition}
\theoremstyle{remark}
\newtheorem{remark}[theorem]{Remark}
\newtheorem{question}[theorem]{Question}
\DeclareMathOperator{\CH}{CH}
\DeclareMathOperator{\GCH}{GCH}
\newcommand{\dom}[1]{{\rm dom}(#1)}
\def\mathunderaccent#1#2 {\let\theaccent#1\skewfactor#2
\mathpalette\putaccentunder}
\def\putaccentunder#1#2{\oalign{$#1#2$\crcr\hidewidth
\vbox to.2ex{\hbox{$#1\skew\skewfactor\theaccent{}$}\vss}\hidewidth}}
\title[Adding highly generic subsets of $\omega_2$]{Adding highly generic subsets of $\omega_2$}
\author{Rouholah Hoseini Naveh}
\address{Department of Pure Mathematics\\
Faculty of Mathematics \& Computer\\
Shahid Bahonar University of Kerman\\
Kerman, Iran}
\email{r.hoseini.nave@gmail.com}
\author{Mohammad Golshani}
\address{School of Mathematics\\
Institute for Research in Fundamental Sciences (IPM)\\
P.O. Box:
19395-5746\\
Tehran-Iran.}
\email{golshani.m@gmail.com}
\urladdr{http://math.ipm.ac.ir/~golshani/}
\author{Esfandiar Eslami}
\address{Department of Pure Mathematics\\
Faculty of Mathematics \& Computer\\
Shahid Bahonar University of Kerman\\
Kerman, Iran}
\email{eeslami@mail.uk.ac.ir}
\thanks{
 The first author's research is partially supported by FWF project V844, hosted by Sandra M\"{u}ller. 
The second author's research has been supported by a grant from IPM (No. 1401030417). The authors thank Rahman Mohammadpour for some useful comments and suggestions}
\keywords {Side condition forcing, Matrices of elementary substructures}
\begin{document}
\makeatletter
\makeatother
\maketitle
\begin{abstract}
Starting from the $\GCH,$ we build a cardinal and $\GCH$ preserving generic extension of the universe, in which there exists a set $A \subseteq \omega_2$ of size $\aleph_2$ so that every
countably infinite subset of $A$ or $\omega_2 \setminus A$ is Cohen generic over the ground model.
\end{abstract}
\section{Introduction}
It is clear that if $\kappa \geq \aleph_0$ is an infinite cardinal, then the Cohen forcing
 $$\mathbb{P}_{\kappa}=\{p: \kappa \longrightarrow 2 : |p|< \aleph_0 \}$$
forces the existence of a set $A \subseteq \kappa$ of size $\kappa$ such that $X \cap A$ and $X \setminus A$ are non-empty for all countably infinite ground model sets
$X \subseteq \kappa$. It also forces $2^{\aleph_0}\geq \kappa$, hence for $\kappa \geq \aleph_2$, the $\GCH$ fails in the extension.
In personal communication with the second author, Moti Gitik asked the following natural question:
 \begin{question}
 \label{q1}
 Suppose that the $GCH$ holds and $\kappa \geq \aleph_2$ is a cardinal. Is there a cardinal and $GCH$ preserving extension of the universe in which there exists a set $A \subseteq \kappa$ of size $\kappa$, such that for all countably infinite sets $X \in \mathcal{P}(\kappa) \cap V$, $X \cap A$ and $X \setminus A$ are nonempty?
 \end{question}
 In this paper we use Todorcevic's method of forcing with matrices of countable elementary substructures to answer the question for the case $\kappa = \aleph_2$.
 
 \begin{theorem}
 \label{t1}
($\GCH$) There exists a cardinal and $\GCH$ preserving generic extension of the universe by a strongly proper forcing notion,
such that in the generic extension, there exists a set $A \subseteq \omega_2$ of size $\aleph_2$ such that for all countably infinite ground model sets $X \subseteq \omega_2$, $X \cap A$ and $X \setminus A$ are non-empty.
\end{theorem}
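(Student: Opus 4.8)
The plan is to realize $A$ as $g^{-1}(1)$ for a generic function $g\colon\omega_2\to 2$ added by a forcing $\mathbb P$ of size $\aleph_2$ whose conditions carry, besides a finite approximation to $g$, a finite \emph{matrix of countable elementary submodels} in the sense of Todorcevic as a side condition. Concretely, I would fix a regular $\theta\ge\aleph_3$ and a well-ordering $\lhd$ of $H(\theta)$, isolate a rich family $\mathcal E$ of countable $M\prec(H(\theta),\in,\lhd)$, and declare a condition to be a pair $p=(f_p,\mathcal M_p)$ with $f_p\colon\omega_2\rightharpoonup 2$ finite and $\mathcal M_p$ a finite coherent matrix drawn from $\mathcal E$ (closed under the relevant intersections, suitably $\in$-ordered, together with a clause relating $f_p$ to $\mathcal M_p$), ordered by $q\le p$ iff $f_q\supseteq f_p$, $\mathcal M_q\supseteq\mathcal M_p$, and, for every $M\in\mathcal M_p$, the restriction $q\restriction M$ belongs to $M$ and extends $p\restriction M$. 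The first batch of lemmas I would prove are the structural ones underlying every side-condition argument: $\mathbb P\neq\emptyset$; given any $p$ and any $x\in H(\theta)$ (resp.\ any $\alpha<\omega_2$) there is $q\le p$ with $x\in M$ (resp.\ $\alpha$ below the height of $M$) for some $M\in\mathcal M_q$, i.e.\ one may always absorb more models; and the amalgamation lemma allowing two conditions that agree below a common model $M$ to be glued together.

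Next I would establish that for each $M\in\mathcal M_p$ the condition $p$ is $(M,\mathbb P)$-strongly generic, so $\mathbb P$ is strongly proper; this yields preservation of $\omega_1$, while the matrix structure (which is precisely what makes a forcing of size $\aleph_2$ behave, at $\omega_2$, like a proper forcing) gives preservation of $\omega_2$. Since $\mathrm{GCH}$ gives $|\mathbb P|=\aleph_2$, cardinals $\ge\aleph_3$ survive for size reasons, and it remains to run the nice-name counts for $\mathrm{GCH}$: the point is that strong genericity forces every new set that a countable model of the matrix can ``see'' to be captured by a single such model, whence $\mathbb P$ adds no new reals and $2^{\aleph_0}=\aleph_1$, while the analogous analysis at the next two levels gives $2^{\aleph_1}=\aleph_2$ and $2^{\aleph_2}=\aleph_3$. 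This preservation package — getting the definition of the matrix exactly right so that strong properness, $\omega_2$-preservation, and above all $\mathrm{GCH}$-preservation all hold simultaneously — is, I expect, the main obstacle; the ``highly generic'' conclusion itself is comparatively soft.

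Finally, in $V[G]$ I would set $g=\bigcup_{p\in G}f_p$, which is a total function $\omega_2\to 2$ by the density of ``$\alpha\in\mathrm{dom}(f_p)$'', and $A=g^{-1}(1)$. Density of ``$\exists\,\xi>\alpha$ with $f_p(\xi)=1$'' (again using that one may absorb a model of large height and then extend the working part) shows $A$ is unbounded, hence of size $\aleph_2$ since $\omega_2$ is preserved. For the diagonalization, fix a countably infinite $X\in V$, $X\subseteq\omega_2$; given any condition $p$, first pass to $q\le p$ with $X\in M$ for some $M\in\mathcal M_q$, so that $X\subseteq M$ as $M$ is a countable elementary submodel, then — using that $X$ is \emph{infinite}, so $X\not\subseteq\mathrm{dom}(f_q)$ — choose $\xi\in X\setminus\mathrm{dom}(f_q)$ (note $\xi\in M$) and extend $f_q$ by $(\xi,i)$, checking that adding a point already visible to every model of $\mathcal M_q$ keeps the condition coherent. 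Thus $\{p:\exists\xi\in X,\ f_p(\xi)=i\}$ is dense for each $i\in 2$, so $G$ meets both, giving $X\cap A\neq\emptyset\neq X\setminus A$. Running this same density analysis \emph{inside} the models $M$ yields the sharper statement of the abstract, that every countable subset of $A$ or of $\omega_2\setminus A$ is Cohen-generic over $V$.
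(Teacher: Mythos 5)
Your overall architecture (finite partial function $f_p\colon\omega_2\to 2$ plus a finite Todorcevic-style matrix of countable elementary submodels as a side condition, then density arguments for the diagonalization) matches the paper, but there is a genuine gap at the heart of the construction: you never identify the clause that actually ties $f_p$ to the matrix, and your proposed mechanism for $\GCH$-preservation is wrong. The paper's conditions require that $f_p$ be \emph{invariant under the isomorphisms between same-height models}: if $M,N\in\mathcal M_p$ with $\delta_M=\delta_N$ and $\alpha\in dom(f_p)\cap M$, then $\varphi_{M,N}(\alpha)\in dom(f_p)$ and $f_p(\varphi_{M,N}(\alpha))=f_p(\alpha)$. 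Without such a symmetry requirement the working part is essentially unconstrained Cohen forcing on $\omega_2$: for $\alpha\neq\beta$ one can always extend $f$ at a fresh point to force $g\restriction[\omega\alpha,\omega\alpha+\omega)\neq g\restriction[\omega\beta,\omega\beta+\omega)$, so the extension has $2^{\aleph_0}\geq\aleph_2$ and $\CH$ fails. The symmetry clause is also what makes the $\aleph_2$-c.c. amalgamation work (two conditions with isomorphic collapsed matrices and a $\Delta$-system of domains are glued by copying values along all the isomorphisms), so it cannot be waved away as ``a clause relating $f_p$ to $\mathcal M_p$'' to be filled in later.

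Your stated route to $\GCH$ preservation --- ``strong genericity forces every new set a model can see to be captured by a single model, whence $\mathbb P$ adds no new reals and $2^{\aleph_0}=\aleph_1$'' --- cannot be repaired. The forcing must add new countable sequences of ordinals: if it did not, then in $V[G]$ any countable $X\subseteq A$ would lie in $V$ and would violate $X\setminus A\neq\emptyset$; and indeed your own last paragraph asserts that countable subsets of $A$ are Cohen-generic over $V$, which contradicts ``adds no new reals.'' The paper's actual argument is a duplication argument made possible by the symmetry clause: given $\omega_2$-many names $\dot r_\alpha$ for pairwise distinct reals, choose countable $M_\alpha\prec H(\theta)$ with $p_\alpha,\dot r_\alpha\in M_\alpha$, find $\alpha<\beta$ with $\langle M_\alpha,\in,\mathbb P,p_\alpha,\dot r_\alpha\rangle\cong\langle M_\beta,\in,\mathbb P,p_\beta,\dot r_\beta\rangle$, and show that the amalgamated condition with side conditions $M_\alpha\cap H(\omega_2), M_\beta\cap H(\omega_2)$ and symmetrized function part forces $\varphi_{M_\alpha,M_\beta}[\dot G\cap M_\alpha]=\dot G\cap M_\beta$, hence $\dot r_\alpha=\dot r_\beta$, a contradiction; this is what gives $\CH$ in the extension (the rest of $\GCH$ then follows from the $\aleph_2$-c.c. and $|\mathbb P|=\aleph_2$). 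Since that duplication step is exactly where the invariance of $f$ under $\varphi_{M,N}$ is used, your proposal as written is missing the key idea of the proof, not just a routine verification. (Your final density argument for $X\cap A\neq\emptyset\neq X\setminus A$ is fine in spirit, but even there one must pick the new points $\alpha,\beta\in X$ so that no isomorphism of the matrix sends one to the other, and then close the extension of $f$ under all the isomorphisms --- another place where the missing clause changes the details.)
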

\begin{remark}
It follows from the proof of Theorem \ref{t1} that the set $A$ above also satisfies the conclusion of the abstract, namely every countably infinite subset of $A$ or $\omega_2 \setminus A$ is Cohen generic over the ground model.
 \end{remark}
 The paper is organized as follows. In section \ref{pre}, we recall some basic definitions and results about strongly proper forcing notions,
 and review Todorcevic's matrix $\in$-collapse forcing, then in section \ref{proof}, we present a proof of Theorem \ref{t1}.

\section{Some preliminaries}
\label{pre}
The notion of proper forcing was introduced by Shelah, see \cite{shelah80}, who showed that proper forcing notions preserve
$\aleph_1$ and that their countable support iteration is again proper. In this paper we work with a stronger concept, called strongly proper,
which was introduced by Mitchell \cite{mitchell}.
\begin{definition}
\label{d1} Let $\mathbb{P}$ be a forcing notion and $X$ be a set.
\begin{enumerate}
\item We say that $p$ is \textit{strongly $(X, \mathbb{P})$-generic} if for any set $D$ which is dense and open in the poset $\mathbb{P} \cap X$, the set $D$ is predense in $\mathbb{P}$ below $p$.
\item The poset $\mathbb{P}$ is \textit{strongly proper} if for every large enough regular cardinal $\theta$, there are club many countable elementary submodels $M$ of $H(\theta)$ such that whenever $p \in M \cap \mathbb{P}$, there exists a strongly $(M, \mathbb{P})$-generic condition below $p$.
\end{enumerate}
\end{definition}
The following easy lemma gives a characterization of strongly $(M, \mathbb{P})$-generic conditions.
\begin{lemma}
\label{l1} Let $\mathbb{P}, \theta$ and $M$ be as above and let $p \in \mathbb{P}.$
Then $p$ is strongly $(M, \mathbb{P})$-generic iff
\begin{itemize}
\item[$(*)$:] For every $q \leq p,$ there is $q\mid_M \in \mathbb{P} \cap M$ such that for every $r \in \mathbb{P} \cap M,$
if $r \leq q\mid_M$, then $r$ and $q$ are compatible in $\mathbb{P}$.
\end{itemize}
\end{lemma}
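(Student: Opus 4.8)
The plan is to prove the two implications separately, handling the direction ``strongly generic $\Rightarrow (*)$'' by contraposition.

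\emph{From $(*)$ to strong genericity.} Assume $(*)$ holds, and let $D \subseteq \mathbb{P}\cap M$ be open and dense. To see that $D$ is predense below $p$, fix an arbitrary $q \le p$; I must produce an element of $D$ compatible with $q$. Applying $(*)$ to $q$ yields a condition $q\mid_M \in \mathbb{P}\cap M$. Since $D$ is dense in $\mathbb{P}\cap M$ and $q\mid_M$ lies in $\mathbb{P}\cap M$, I may choose $r \in D$ with $r \le q\mid_M$. By the defining property of $q\mid_M$, the conditions $r$ and $q$ are compatible, which is exactly what was required. (Note that openness of $D$ is not even used in this direction.)

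\emph{From strong genericity to $(*)$, contrapositively.} Suppose $(*)$ fails. Then there is $q \le p$ such that for every $s \in \mathbb{P}\cap M$ there is $r \in \mathbb{P}\cap M$ with $r \le s$ and $r$ incompatible with $q$. Set
\[
D := \{\, r \in \mathbb{P}\cap M : r \text{ is incompatible with } q \,\}.
\]
First, $D$ is downward closed within $\mathbb{P}\cap M$, hence open: if $r \in D$ and $r' \le r$ with $r' \in \mathbb{P}\cap M$, then any common extension of $r'$ and $q$ would also extend $r$ and $q$, contradicting $r \in D$; so $r' \in D$. Second, the failure of $(*)$ recorded above says precisely that $D$ is dense in $\mathbb{P}\cap M$. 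Finally, $q \le p$ and $q$ is incompatible with every member of $D$, so $D$ is not predense below $p$. Hence $p$ is not $(M,\mathbb{P})$-strongly generic, which gives the contrapositive.

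The argument is entirely routine; the only point calling for a little care is to recognize, in the second part, that the right open dense subset of $\mathbb{P}\cap M$ witnessing a failure of strong genericity is exactly the collection of conditions of $M$ that are incompatible with the ``bad'' extension $q$, together with the short verification that this collection is downward closed and dense inside $\mathbb{P}\cap M$. I do not anticipate any genuine obstacle.
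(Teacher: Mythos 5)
Your proof is correct and is the standard argument for this characterization; note that the paper itself states Lemma \ref{l1} without proof, labelling it easy, and your two-direction argument (with the set of conditions of $M$ incompatible with the bad $q$ serving as the open dense witness in the contrapositive direction) is exactly the intended routine verification.
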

The method of forcing with side conditions was introduced by Todorcevic \cite{todorcevic1}, who used it to get several consequences of the proper forcing axiom. He also introduced a variant of his method, where the side conditions form a matrix, and not a chain, see \cite{todorcevic3} and \cite{todorcevic2} for a detailed exposition of the method. We will use matrix side conditions as working parts of our forcing notion to prove Theorem \ref{t1}.

Fix a well-ordering $\lhd$ of $H(\omega_2)$.
Throughout this paper, by $M \prec H(\omega_2)$ we mean $\langle M, \in, \lhd \cap M^2 \rangle$ is an elementary substructure of $\langle H(\omega_2), \in, \lhd \rangle$. Set
$$\mathcal{S}=\{M \in [H_{\omega_2}]^{\aleph_0}: M \prec H_{\omega_2} \}.$$
Note that $\mathcal{S}$ is a club subset of $[H(\omega_2)]^{\aleph_0}$. For every $M, N \in \mathcal{S}$, we write $M\cong N$ if and only if $\langle M, \in\rangle$ is isomorphic to $\langle N, \in \rangle$, and denote the unique isomorphism between them by $\varphi_{M,N}:M \xrightarrow{\simeq} N$.
 For each $M \in \mathcal{S}$ we denote $M \cap \omega_1$ by $\delta_M$, $M\cap \omega_2$ by $\beta_M$, and if $p \subseteq \mathcal{S}$, we let $\dom{p}=\{\delta_M: M\in p \}$. Also $p(\delta)$ denotes the set of all $M \in p$ with $\delta_M=\delta$. We are now ready to define
 the matrix $\in$-collapse forcing.

\begin{definition}\label{deftod}
The forcing notion $\mathbb{Q}$ consists of all finite $p\subset \mathcal{S}$ satisfying the following conditions:
\begin{itemize}
\item[$(1)$] If $M,N \in p$ and $\delta_M=\delta_N$, then $ M \cong N$;
\item[$(2)$] If $M \in p$ and $\delta \in \dom{p}$ is such that $\delta_M < \delta$, then there exists $N \in p(\delta)$ such that $M \in N$.
\end{itemize}
For $p, q \in \mathbb{Q}$, we say $p \leq q$ if $q \subseteq p$.
\end{definition}
We may remark that  the forcing notion $\mathbb{Q}$ defined above  is equivalent to the forcing notion $\mathcal{P}$ from \cite[Definition 2.1]{todorcevic3}.  To see this, in the definition $2.1$ of \cite{todorcevic3}, set  $\theta = \omega_2$, and   note that using the notations $\dom{p}$ and $p(\delta)$ given above, we can easily check that every condition $p \in \mathbb{Q}$, is indeed a finite function $p \colon \omega_1 \longrightarrow H(\omega_2)$ which satisfies all the required items for poset $\mathcal{P}$ from  \cite[Definition 2.1]{todorcevic3}.
We have the following lemma.
\begin{lemma}
\label{l2} ($\GCH$) The forcing notion $\mathbb{Q}$ is strongly proper,
satisfies the $\aleph_2$-c.c., and preserves the $\GCH.$
\end{lemma}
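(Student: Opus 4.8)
The plan is to prove the three assertions in the order stated, obtaining the chain condition and the preservation of $\GCH$ from strong properness together with the $\GCH$ cardinal arithmetic; throughout write $\mathbb{P}:=\mathbb{P}_{\in}^{M}$.

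\emph{Strong properness.} Fix a sufficiently large regular $\theta$, a countable $M^{*}\prec H(\theta)$ with $\mathbb{P},H_{\omega_{2}},\lhd\in M^{*}$, and $p\in\mathbb{P}\cap M^{*}$. Put $M:=M^{*}\cap H_{\omega_{2}}$; then $M\in\mathcal{S}$, $\delta_{M}=M^{*}\cap\omega_{1}$, and since $p$ is a finite element of $M^{*}$ one has $p\subseteq M$, so $\delta_{N}<\delta_{M}$ for every $N\in p$. The candidate strongly generic condition is $q:=p\cup\{M\}$. I would first check $q\in\mathbb{P}$: clause $(1)$ of Definition~\ref{deftod} imposes nothing at the new model since $M$ is the unique member of $q$ with $\delta$-value $\delta_{M}$, and clause $(2)$ adds only the demand that each $N\in p$ lie in a member of $q(\delta_{M})=\{M\}$, which holds as $N\in p\subseteq M$; clearly $q\leq p$. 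The heart of the matter is that $q$ is $(M^{*},\mathbb{P})$-strongly generic, which I would verify via condition $(\ast)$ of Lemma~\ref{l1}: given $r\leq q$, one must exhibit a reduction $r|_{M}\in\mathbb{P}\cap M$. This reduction is obtained by projecting the part of $r$ at levels below $\delta_{M}$ into $M$ through the collapsing isomorphisms: for $N\in r$ with $\delta_{N}<\delta_{M}$, choose $N'\in r(\delta_{M})$ witnessing clause $(2)$ for the pair $(N,\delta_{M})$ and replace $N$ by $\varphi_{N',M}(N)\in M$ (taking $N'=M$, hence fixing $N$, whenever $N\in M$ already). One then checks, using elementarity of $M$ and of the models in $r(\delta_{M})$ together with the fact that $\varphi_{N',M}$ carries $\mathcal{S}\cap N'$ onto $\mathcal{S}\cap M$ and preserves ``being a condition'', that $r|_{M}$ is a well-defined member of $\mathbb{P}$ contained in $M$ (hence $r|_{M}\in M$, a finite subset of an elementary submodel being an element of it), and finally that any $s\in\mathbb{P}\cap M$ with $s\leq r|_{M}$ is compatible with $r$: the witness is $r\cup s$, enlarged by finitely many $\varphi_{M,N'}$-images ($N'\in r(\delta_{M})$) of the new models of $s$ so as to meet clause $(2)$ up the matrix, and the verification that this is a condition reduces to transporting the $\in$-incidences recorded below $\delta_{M}$ in $s$ upward along the isomorphisms $\varphi_{M,N'}$, using elementarity of the $N'$. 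Making the restriction operation well-defined and checking that it amalgamates is the main obstacle; the rest is bookkeeping with Definition~\ref{deftod}.

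\emph{The $\aleph_{2}$-chain condition.} Given $\{p_{\alpha}:\alpha<\omega_{2}\}\subseteq\mathbb{P}$, I would first thin to $\aleph_{2}$ indices on which $\mathrm{dom}(p_{\alpha})$ is a fixed finite $d\subseteq\omega_{1}$ (possible as $|[\omega_{1}]^{<\omega}|=\aleph_{1}$), each $|p_{\alpha}(\delta)|$ ($\delta\in d$) and each isomorphism type occurring in $p_{\alpha}$ are fixed, and — applying the $\Delta$-system lemma to the finite sets $\{\beta_{N}:N\in p_{\alpha}\}\subseteq\omega_{2}$ and using $\CH$ — the $\beta$-values form a $\Delta$-system with root on which the $p_{\alpha}$ agree literally, while outside the root they are matched by $\in$- and $\delta$-order-preserving isomorphisms. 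For a suitable pair $\alpha<\beta$ I would then show $p_{\alpha}$ and $p_{\beta}$ compatible by the same amalgamation device as above, supplying the $\in$-incidences required by clause $(2)$ from the matching isomorphisms and elementarity; in fact this gives that $\mathbb{P}$ is $\aleph_{2}$-Knaster.

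\emph{Preservation of $\GCH$.} Strong properness preserves $\omega_{1}$ and the $\aleph_{2}$-c.c.\ preserves all cardinals and cofinalities $\geq\aleph_{2}$, so all cardinals are preserved. Under $\GCH$, $|H_{\omega_{2}}|=\aleph_{2}$, hence $|\mathbb{P}|=\aleph_{2}$, and by the $\aleph_{2}$-c.c.\ antichains have size $\leq\aleph_{1}$; counting nice names then gives $2^{\lambda}\leq\lambda^{+}$ in the extension for every cardinal $\lambda\geq\aleph_{1}$, so with $\GCH$ in $V$ the power function is pinned down above $\aleph_{0}$. The delicate point is $2^{\aleph_{0}}$: $\mathbb{P}$ does add new reals — the reduction of $q=p\cup\{M\}$ above exhibits a countable atomless projection of $\mathbb{P}$ below $q$, i.e.\ a Cohen real — so one must bound the number of reals added by $\aleph_{1}$. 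A finer argument, exploiting clause $(1)$ — which forces the generic matrix to cohere under the maps $\varphi_{M,N}$, so that only $\aleph_{1}$-many distinct pieces of generic data occur (one per isomorphism type of a member of $\mathcal{S}$, of which there are $\aleph_{1}$) and every real of the extension is computed from the ground model together with $\aleph_{1}$-many such pieces — then yields that at most $\aleph_{1}$ reals are added, whence $2^{\aleph_{0}}=\aleph_{1}$ is preserved and $\GCH$ holds in the extension.
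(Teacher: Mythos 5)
The paper does not actually prove Lemma \ref{l2}; it cites Kuzeljevic--Todorcevic \cite{todorcevic3}. Measured against that standard treatment (and against the parallel arguments the paper itself gives for $\mathbb{P}$ in Section \ref{proof}), your first two parts are essentially the intended route and are fine as sketches: adding $M^{*}\cap H(\omega_2)$ on top and reducing a condition $r$ into $M$ by projecting along the isomorphisms with the models at level $\delta_M$, then amalgamating any $s\leq r|_M$ by adjoining the copies $\varphi_{M,N'}(S)$ for $N'\in r(\delta_M)$, is exactly the mechanism of Lemma \ref{l22}; note that the paper's analogue of your restriction takes images along \emph{all} paths ending at level $\delta_{M_0}$, which disposes of the well-definedness worry you flag. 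Your $\aleph_2$-c.c.\ argument also matches Lemma \ref{l4}: under $\CH$ one stabilizes the set of transitive collapses (this already fixes the levels, since the collapse of $\omega_1$ is $\delta_N$), after which the plain union of two conditions is a condition; the $\Delta$-system step is superfluous here because $\mathbb{P}_{\in}^{M}$ has no working part.

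The genuine gap is in the preservation of $\CH$. The sentence ``every real of the extension is computed from the ground model together with $\aleph_1$-many pieces of generic data, one per isomorphism type'' is precisely what needs proof, and the naive way of making it precise fails: for a \emph{fixed} countable $M\prec H(\theta)$ the $(M,\mathbb{P}_{\in}^{M})$-strongly generic conditions are not dense (one can only place $M\cap H(\omega_2)$ on top of conditions whose models sit suitably inside $M$), so you cannot argue that an arbitrary real of $V[G]$ is read off from $G\cap M$ for a ground-model-chosen $M$ containing its name, and counting isomorphism types alone does not bound the number of new reals. The missing ingredient is the identification lemma: for isomorphic countable $M\cong M'$ with $\mathbb{P}_{\in}^{M}\in M\cap M'$ there is a condition containing both $M\cap H(\omega_2)$ and $M'\cap H(\omega_2)$ as side models which forces $\varphi_{M,M'}[\dot{G}\cap \check{M}]=\dot{G}\cap \check{M'}$. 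With it, one argues by contradiction exactly as the paper does for $\mathbb{P}$: given names $\dot{r}_\alpha$, $\alpha<\omega_2$, for pairwise distinct reals below $p$, choose $p_\alpha\leq p$ and countable $M_\alpha\ni(p_\alpha,\dot{r}_\alpha)$, use $\CH$ to find $\alpha<\beta$ with $\langle M_\alpha,\in,p_\alpha,\dot{r}_\alpha\rangle\cong\langle M_\beta,\in,p_\beta,\dot{r}_\beta\rangle$, and build a common extension of $p_\alpha,p_\beta$ with both side models, which then forces $\dot{r}_\alpha=\dot{r}_\beta$, a contradiction. Your nice-name count for $2^{\lambda}$ with $\lambda\geq\aleph_1$ is correct; it is exactly the $2^{\aleph_0}$ case that requires this extra lemma, which your proposal gestures at but does not supply.
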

\begin{proof}
See \cite{todorcevic3}
\end{proof}

\section{Proof of Theorem \ref{t1}}
\label{proof}
In this section we prove our main theorem, by introducing a strongly proper forcing notion which preserves the $\GCH$ and adds a set $A \subseteq \omega_2$ as requested.
Let us start by defining our forcing notion.
\begin{definition}
\label{d2}
A pair $p=\langle \mathcal{M}_p, f_p \rangle$ is a condition of $\mathbb{P}$ whenever:
\begin{itemize}
\item[$(i)$] $\mathcal{M}_p \in \mathbb{Q}$;
\item[$(ii)$] $f_p: \omega_2 \longrightarrow 2$ is a finite partial function; and
\item[$(iii)$] if $M,N \in \mathcal{M}_p$ with $\delta_M=\delta_N$, then
 \begin{itemize}
\item $\alpha \in (\dom{f_p} \cap M) \Rightarrow \varphi_{M,N}(\alpha) \in \dom{f_p}$,
\item for each $\alpha$ as above, $f_p(\varphi_{M,N}(\alpha))=f_p(\alpha)$.
\end{itemize}
\end{itemize}
For $p, q \in \mathbb{P}$, we say $p \leq q$ if and only if $\mathcal{M}_q \subseteq \mathcal{M}_p$ and $f_q \subseteq f_p$.
\end{definition}
 The following lemma plays a key role
in the verification of strong properness of $\mathbb{P}$.
\begin{lemma}\label{mhatlemma}
Let $\theta > \omega_2$ be a large enough regular cardinal and let $M \prec H(\theta)$ be countable. Let $p=\langle \mathcal{M}_p, f_p \rangle \in \mathbb{P}$ such that $M \cap H(\omega_2)=M_0 \in \mathcal{M}_p$. Then there are $\hat{\mathcal{M}}_p$ and $\hat{f}_p$ which satisfy the following conditions:
\begin{itemize}
\item[$(1)$] $\hat{\mathcal{M}}_p \in \mathbb{Q} \cap M$;
\item[$(2)$] $\dom{\hat{\mathcal{M}}_p}= \dom{\mathcal{M}_p} \cap M$;
\item[$(3)$] $\mathcal{M}_p \cap M \subseteq \hat{\mathcal{M}}_p$;
\item[$(4)$] if $\alpha \in \dom{\hat{\mathcal{M}}_p}, N_1 \in \mathcal{M}_p(\alpha)$ and $N_2 \in \hat{\mathcal{M}}_p(\alpha)$, then $N_1 \cong N_2$;
\item[$(5)$] $\hat{\mathcal{M}}_p \cup \mathcal{M}_p$ is $M_0$-full, i.e. for every $N\in(\hat{\mathcal{M}}_p \cup \mathcal{M}_p) \cap M_0$, for every $\delta \in \dom{\hat{\mathcal{M}}_p \cup \mathcal{M}_p}$, with $\delta_N < \delta < \delta_{M_0}$, there exists $K\in (\hat{\mathcal{M}}_p \cup \mathcal{M}_p)(\delta)$ such that $N \in K \in M_0$;
 \item[$(6)$] $\hat{f}_p \supseteq f_p \restriction M;$
\item[$(7)$] $\hat{p}= \langle \hat{\mathcal{M}}_p, \hat{f}_p \rangle \in \mathbb{P} \cap M$; and
\item[$(8)$] $\hat{p}$ and $p$ are compatible.
\end{itemize}
\end{lemma}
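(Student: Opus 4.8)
The plan is to let $\hat p=\langle\hat{\mathcal M}_p,\hat f_p\rangle$ serve as the condition ``$p\mid_M$'' of Lemma \ref{l1}, obtained by reflecting $p$ through $M_0$. For the side‑condition part, recall $M_0\in\mathcal M_p$, so for every level $\delta\in dom(\mathcal M_p)\cap\delta_{M_0}$ and every $N\in\mathcal M_p(\delta)$, clause $(2)$ of Definition \ref{deftod} gives some $N'\in\mathcal M_p(\delta_{M_0})$ with $N\in N'$; since $N'\prec H_{\omega_2}$ regards $N$ as countable we get $N\subseteq N'$, and since $N'\cong M_0$ the image $N^{*}:=\varphi_{N',M_0}[N]=\varphi_{N',M_0}(N)\in M_0$ is isomorphic to $N$ with $\delta_{N^{*}}=\delta$ (the map $\varphi_{N',M_0}$ is the identity on ordinals $<\delta_{M_0}$, because $N'$ and $M_0$ both contain $\delta_{M_0}$ as a subset; more generally, whenever $\delta_K=\delta_L$ the map $\varphi_{K,L}$ fixes every ordinal $<\delta_K$). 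Collecting these copies together with $\mathcal M_p\cap M_0$ (which equals $\mathcal M_p\cap M$, and for which one takes $N'=M_0$, $N^{*}=N$, yielding $(3)$) and then closing the resulting finite set off inside $M_0$ exactly as in Todorcevic's residue analysis of the matrix $\in$‑collapse, one gets $\hat{\mathcal M}_p\in\mathbb P_{\in}^{M}$ with $dom(\hat{\mathcal M}_p)=dom(\mathcal M_p)\cap M$, with every level $\delta<\delta_{M_0}$ populated only by copies of the single isomorphism type of $\mathcal M_p(\delta)$, and with $\hat{\mathcal M}_p\cup\mathcal M_p$ a condition of $\mathbb P_{\in}^{M}$ that is $M_0$‑full; this gives $(2)$, $(3)$, $(4)$, $(5)$ and the side‑condition halves of $(7)$ and $(8)$. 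Finally $\hat{\mathcal M}_p$ is a \emph{finite} subset of $M_0\subseteq M$, so by elementarity of $M\prec H(\theta)$ (closure under finite sets) $\hat{\mathcal M}_p\in M$, which together with the above gives $(1)$.

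For the working part I would take $\hat f_p$ to be the closure of $f_p\restriction M=f_p\restriction M_0$ under clause $(iii)$ of Definition \ref{d2} relative to $\hat{\mathcal M}_p$. This gives $(6)$ outright and makes $\hat p$ satisfy clause $(iii)$; moreover every ordinal it adds lies in some $N_2\in\hat{\mathcal M}_p$, hence in $N_2\subseteq M_0$, so $\hat f_p$ is a finite partial map $\omega_2\to 2$ with domain inside $M_0\subseteq M$, whence (elementarity again) $\hat f_p\in M$ and $\hat p=\langle\hat{\mathcal M}_p,\hat f_p\rangle\in\mathbb P\cap M$, which is $(7)$. For $(8)$ I would exhibit the explicit common lower bound $r=\langle\mathcal M_r,f_r\rangle$ with $\mathcal M_r=\mathcal M_p\cup\hat{\mathcal M}_p\in\mathbb P_{\in}^{M}$ (as above) and $f_r$ the closure of $f_p\cup\hat f_p$ under clause $(iii)$ relative to $\mathcal M_r$; then $\mathcal M_p,\hat{\mathcal M}_p\subseteq\mathcal M_r$ and $f_p,\hat f_p\subseteq f_r$, so $r\le p$ and $r\le\hat p$, provided $r\in\mathbb P$, i.e. provided these ``close off'' steps are consistent and finite.

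The main obstacle, and the only non‑routine point, is exactly this consistency and finiteness. The key is that every ordinal $\beta$ that ever enters a domain can be written $\beta=\varphi_{N',M_0}(\gamma)$ for some $N'\in\mathcal M_p(\delta_{M_0})$ and some $\gamma\in dom(f_p)\cap N'$: one proves this by induction on the closure, using that each $N^{*}\in\hat{\mathcal M}_p$ factors as $\varphi_{N',M_0}[N]$ for an $N\in\mathcal M_p$ of the same level, so that every isomorphism $\varphi_{N_1,N_2}$ between same‑level models of $\mathcal M_r$ decomposes into maps $\varphi_{N',M_0}$, $\varphi_{M_0,N''}$ (which are the identity below $\delta_{M_0}$) and maps $\varphi_{\bar N_1,\bar N_2}$ between same‑level models of $\mathcal M_p$ itself. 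Since $f_p$ satisfies clause $(iii)$ relative to $\mathcal M_p$ — in particular at the pairs of level $\delta_{M_0}$ — transporting $\beta$ back along $\varphi_{M_0,N'}$ lands in $dom(f_p)$ with a value $f_p(\gamma)$ that is independent of the choice of $N'$ (two choices are reconciled by clause $(iii)$ for the relevant pair of level‑$\delta_{M_0}$ models); this determines a single legitimate value for $\hat f_p(\beta)$, so the closures never clash, and they are finite because $\mathcal M_r$ has finitely many models, so the orbit of any ordinal of $dom(f_p)$ under all these isomorphisms is finite. Everything else is a direct check: that $\mathcal M_r$ satisfies clauses $(1)$ and $(2)$ of Definition \ref{deftod}, where clause $(2)$ splits into the case ``level $\ge\delta_{M_0}$'' (from clause $(2)$ for $\mathcal M_p$, using $M_0\subseteq K$ for higher $K\in\mathcal M_p$) and the case ``level $<\delta_{M_0}$'' (from the $M_0$‑fullness clause $(5)$ for models of $\hat{\mathcal M}_p$, and from clause $(2)$ for $\mathcal M_p$ for models of $\mathcal M_p$).
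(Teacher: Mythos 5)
Your proposal is correct and follows essentially the same route as the paper: $\hat{\mathcal{M}}_p$ is taken to be the family of $M_0$-copies of the models of $\mathcal{M}_p$ below $\delta_{M_0}$ obtained via the level-$\delta_{M_0}$ isomorphisms (a finite subset of $M_0$, hence an element of $M$), and clause $(iii)$ is verified by factoring every isomorphism between same-level models of $\hat{\mathcal{M}}_p\cup\mathcal{M}_p$ through the level-$\delta_{M_0}$ models of $\mathcal{M}_p$ and applying Definition \ref{d2}$(iii)$ for $p$. The only real difference is packaging: the paper sets $\hat f_p=f_p\restriction M$ and shows it is already closed under the relevant isomorphisms, whereas you define $\hat f_p$ (and the common extension's working part) as a closure and then argue consistency and finiteness --- by exactly that factorization argument the closure adds nothing new, so your extra step reduces to the paper's computation.
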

\begin{proof}
 Note that if $\delta \in \dom{\mathcal{M}_p} \cap M$
 and $N \in \mathcal{M}_p(\delta)$, then $N\cong N'$ for some $N'\in M_0$. Thus by elementarity, we can obtain $\hat{\mathcal{M}}_p$, satisfying items $(1)$-$(4)$. It is then clear that $\hat{\mathcal{M}}_p \cup \mathcal{M}_p$ is $M_0$-full,
 hence clause $(5)$
 is satisfied as well. Furthermore note that $\hat{\mathcal{M}}_p$ and $\mathcal{M}_p$ are compatible.

 Now set $\hat{f}_p=f_p \restriction M.$ Then items $(6)$ and $(8)$ are satisfied trivially, so we are left to show that
$\hat{p}= \langle \hat{\mathcal{M}}_p, \hat{f}_p \rangle$ is indeed a condition.
We just need to show that if $N_1, N_2 \in \hat{\mathcal{M}}_p, \delta_{N_1}=\delta_{N_2}$ and $\alpha \in \dom{f_p{\restriction M}} \cap N_1$, then $\varphi_{N_1,N_2}(\alpha) \in \dom{f_p{\restriction M}}$ and $f_p(\varphi_{N_1,N_2}(\alpha))=f_p(\alpha)$.
Fix $N_1, N_2$ and $\alpha$ as above. Note that $\alpha \in M.$

Since $N_1, N_2 \in M, \varphi_{N_1,N_2} \in M$ and hence $\varphi_{N_1,N_2}(\alpha) \in M.$
 Now $\alpha \in N_1 \in \hat{\mathcal{M}}_p$ implies that there exist $x$, $N_1'$ and $M_1'$ such that:
 \begin{enumerate}
 \item $x \in N_1' \in M_1' \in \mathcal{M}_p(\delta_{M_0})$,
 \item $N_1= \varphi_{M_1', M_0}(N_1')$, and
 \item $\alpha= \varphi_{M_1', M_0}(x)$.
 \end{enumerate}
 Then by Definition \ref{d2}(iii),
 \begin{center}
 $x \in \dom{f_p}$ and $f_p(x)=f_p(\alpha)$.
 \end{center}
 Also $\varphi_{N_1,N_2}(\alpha) \in N_2 \in \hat{\mathcal{M}}_p$ implies that there exist
 $y, N_2'$ and $M_2'$ such that
\begin{enumerate}
\item[(4)] $y \in N_2' \in M_2' \in \mathcal{M}_p(\delta_{M_0})$,
\item[(5)] $N_2= \varphi_{M_2', M_0}(N_2')$, and
\item[(6)] $\varphi_{N_1,N_2}(\alpha)= \varphi_{M_2', M_0}(y)$.
\end{enumerate}
Then $\varphi_{M_1', M_2'}(x)=y$, so
by Definition \ref{d2}(iii),
\begin{center}
$y \in \dom{f_p}$ and $f_p(y)=f_p(x)$.
\end{center}
By clause (6), $\varphi_{N_1,N_2}(\alpha) \in \dom{f_p}$ and
\[
f_p(\varphi_{N_1,N_2}(\alpha) )=f_p(\varphi_{M_2', M_0}(y))=f_p(y)=f_p(x)=f_p(\alpha).
\]
 The lemma follows.
\end{proof}
The next lemma is standard.
\begin{lemma}\cite[Lemma $2.7$]{todorcevic3} \label{xi}
If $N_0$ and $N_1$ are two isomorphic elementary substructures of $H(\omega_2)$ and $\beta \in N_0 \cap N_1 \cap \omega_2$, then for all $\xi < \beta, \xi \in N_0$ if and only if $\xi \in N_1$.
\end{lemma}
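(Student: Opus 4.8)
The plan is to express the set of ordinals below $\beta$ that lie in $N_i$ as the image, under one canonical bijection sitting in both models, of a common initial segment of $\omega_1$; once that is done the equivalence is immediate. I would begin with the routine observation that for any $N \prec H(\omega_2)$ the set $\delta_N = N \cap \omega_1$ is an ordinal: given $\eta \in N \cap \omega_1$, by elementarity there is a surjection $h \colon \omega \to \eta$ with $h \in N$, and since $\omega \subseteq N$, every $\zeta < \eta$ is of the form $h(n)$ with $n \in N$, hence $\zeta \in N$; thus $N \cap \omega_1$ is transitive, i.e.\ an ordinal.

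The next step is to show $\delta_{N_0} = \delta_{N_1}$. Here I would use that $\omega_1$ is, provably in $H(\omega_2)$, the largest cardinal; since this is a first-order property in the language $\{\in\}$ with no parameters, it holds of $\omega_1$ in each $N_i$ (by elementarity) and is satisfied in $N_i$ by that element only. As the isomorphism $\varphi = \varphi_{N_0,N_1}$ preserves first-order truth, $\varphi(\omega_1)$ must be the largest cardinal of $N_1$, namely $\omega_1$; so $\varphi(\omega_1) = \omega_1$. Consequently $\varphi$ restricts to an order isomorphism between the ordinals $N_0 \cap \omega_1 = \delta_{N_0}$ and $N_1 \cap \omega_1 = \delta_{N_1}$, and since two order-isomorphic ordinals coincide, $\delta_{N_0} = \delta_{N_1} =: \delta$.

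Now fix $\beta \in N_0 \cap N_1 \cap \omega_2$ and $\xi < \beta$. If $\beta \le \omega_1$ then $\xi < \omega_1$, so for $i = 0,1$ we have $\xi \in N_i$ iff $\xi \in N_i \cap \omega_1 = \delta$, whence $\xi \in N_0 \iff \xi \in N_1$. If $\omega_1 < \beta < \omega_2$ then $|\beta| = \aleph_1$, and we may let $g$ be the $\lhd$-least bijection from $\omega_1$ onto $\beta$. As $g$ is definable in $\langle H(\omega_2), \in, \lhd \rangle$ from the parameter $\beta \in N_0 \cap N_1$, both $g$ and $g^{-1}$ belong to $N_0 \cap N_1$. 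Then for $i = 0,1$ we have $\xi \in N_i \iff g^{-1}(\xi) \in N_i \iff g^{-1}(\xi) \in N_i \cap \omega_1 = \delta$, and the last condition does not depend on $i$; therefore $\xi \in N_0 \iff \xi \in N_1$.

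The only step demanding any care is the identity $\varphi(\omega_1) = \omega_1$, which rests on $\omega_1$ being parameter-free definable in $H(\omega_2)$ as the largest cardinal; everything else is bookkeeping with elementarity, together with the use of the fixed well-order $\lhd$ to pin down one specific bijection $g$ that provably lies in both $N_0$ and $N_1$.
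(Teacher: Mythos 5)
Your proof is correct and follows essentially the same route as the paper: both transfer $\xi$ across via a canonical ($\lhd$-definable) injection/bijection between $\beta$ and $\omega_1$ lying in both models, and then reduce to the common value $N_0\cap\omega_1=N_1\cap\omega_1$ (the paper uses a single definable sequence $\langle e_\gamma:\gamma<\omega_2\rangle$ of injections, you use the $\lhd$-least bijection $g:\omega_1\to\beta$ definable from $\beta$). Your explicit verification that $\delta_{N_0}=\delta_{N_1}$, which the paper leaves implicit, is a welcome but inessential addition.
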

\begin{proof}
For each $\xi \in \omega_2$, there is a $1-1$-function from $\xi$ into $\omega_1$. Without loss of generality we can assume that both $N_0$ and $N_1$ contain the same family of mappings $\langle e_{\gamma}: \gamma \in \omega_2 \rangle$ where $e_{\gamma}: \gamma \longrightarrow \omega_1$ is a $1-1$-function. Let $\xi < \beta$ and $\xi \in N_0$, so $e_{\beta}(\xi) \in N_0 \cap \omega_1=N_1 \cap \omega_1$. Hence $\xi=e_{\beta}^{-1}(e_{\beta}(\xi)) \in N_1$.
\end{proof}
\begin{definition}
Let $X$ be a set. A finite subset $w=\{x_0, x_1, \dots, x_{n-1} \} \subseteq X$ is called an $X$-\textit{path}, if $x_i \in x_{i+1}$ for all $0 \leq i < n-1$. We may use \textit{path} instead of $X$-path, when the set $X$ is evident from the context.
\end{definition}
The next lemma guarantees the existence of natural strongly $(M, \mathbb{P})$-generic conditions.
\begin{lemma}
\label{l22}
Let $\theta > \omega_2$ be a large enough regular cardinal and let $M \prec H_{\theta}$ be countable. If $p=\langle \mathcal{M}_p, f_p \rangle \in \mathbb{P}$ with $M \cap H(\omega_2) = M_0 \in \mathcal{M}_p$ and $f_p \in M$, then $p$ is a strongly $(M, \mathbb{P})$-generic condition.
\end{lemma}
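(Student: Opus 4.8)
The plan is to verify criterion $(*)$ of Lemma~\ref{l1}: given $q \le p$, produce a condition $q|_M \in \mathbb{P}\cap M$ such that every $r \in \mathbb{P}\cap M$ below $q|_M$ is compatible with $q$. Fix $q = \langle\mathcal{M}_q,f_q\rangle \le p$. Since $\mathcal{M}_p\subseteq\mathcal{M}_q$ we have $M_0\in\mathcal{M}_q$, so Lemma~\ref{mhatlemma} applies to $q$ and yields $\hat q = \langle\hat{\mathcal{M}}_q,\hat f_q\rangle\in\mathbb{P}\cap M$ with $\hat f_q = f_q\restriction M$, satisfying clauses (1)--(8) of that lemma; note $\hat f_q\in M$ because $\mathrm{dom}(f_q)\cap M$ is a finite subset of $M$. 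Set $q|_M := \hat q$.

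Now fix $r = \langle\mathcal{M}_r,f_r\rangle\in\mathbb{P}\cap M$ with $r\le\hat q$, so $\hat{\mathcal{M}}_q\subseteq\mathcal{M}_r$ and $\hat f_q\subseteq f_r$. As $r$ is a finite element of $M$, every model of $\mathcal{M}_r$ is an element of $M_0$ (hence has level $<\delta_{M_0}$) and $\mathrm{dom}(f_r)\subseteq M_0$. Since $f_p\in M$ we get $f_p = f_p\restriction M\subseteq f_q\restriction M = \hat f_q\subseteq f_r$, so $f_r$ and $f_q$ both extend $f_p$; moreover $f_r\cup f_q$ is a function, because $\mathrm{dom}(f_r)\cap\mathrm{dom}(f_q)\subseteq M\cap\mathrm{dom}(f_q) = \mathrm{dom}(\hat f_q)$, on which $f_r$ and $f_q$ agree. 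I will build $s = \langle\mathcal{M}_s,f_s\rangle\le q,r$, which proves $q$ and $r$ compatible. For the matrix coordinate I would amalgamate $\mathcal{M}_r$ and $\mathcal{M}_q$ by the standard argument behind Lemma~\ref{l2}, applied to the pair $\mathcal{M}_r\le\hat{\mathcal{M}}_q$ in $M$ (recall $\hat{\mathcal{M}}_q$ is a restriction of $\mathcal{M}_q$ to $M$): this produces $\mathcal{M}_s\in\mathbb{P}_\in^M$ with $\mathcal{M}_q,\mathcal{M}_r\subseteq\mathcal{M}_s$, where every model of $\mathcal{M}_s$ outside $\mathcal{M}_q\cup\mathcal{M}_r$ is obtained by applying an isomorphism $\varphi_{M_0,K}$ (with $K\in\mathcal{M}_q$, $\delta_K=\delta_{M_0}$) to a model of $\mathcal{M}_r$, together with finitely many further images of these under isomorphisms between copies of $M_0$ occurring in $\mathcal{M}_q$; in particular all new models have level $<\delta_{M_0}$, and, as always in $\mathbb{P}_\in^M$, any two models of $\mathcal{M}_s$ of the same level are isomorphic. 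Clause (2) of Definition~\ref{deftod} for $\mathcal{M}_s$ is checked by cases on the levels involved, using Lemma~\ref{mhatlemma}(3),(5), the fact that $M_0\subseteq K$ whenever $M_0\in K\in\mathcal{M}_s$, and the observation that the only models of $\mathcal{M}_q$ of level $<\delta_{M_0}$ not already in $\hat{\mathcal{M}}_q$ are those lying inside some $K\in\mathcal{M}_q(\delta_{M_0})$ with $K\ne M_0$, for which the new models coming from $\varphi_{M_0,K}[\mathcal{M}_r]$ supply the required witnesses.

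For the function coordinate I would take $f_s$ to be the closure of $f_r\cup f_q$ under Definition~\ref{d2}(iii) relative to $\mathcal{M}_s$, i.e.
\[
\mathrm{dom}(f_s) = \big(\mathrm{dom}(f_r)\cup\mathrm{dom}(f_q)\big)\cup\{\varphi_{N_1,N_2}(\alpha) : N_1,N_2\in\mathcal{M}_s,\ \delta_{N_1}=\delta_{N_2},\ \alpha\in\big(\mathrm{dom}(f_r)\cup\mathrm{dom}(f_q)\big)\cap N_1\},
\]
with $f_s(\varphi_{N_1,N_2}(\alpha)) = (f_r\cup f_q)(\alpha)$. Since each $\langle N,\in\rangle$ is rigid the relevant isomorphisms compose ($\varphi_{N_2,N_3}\circ\varphi_{N_1,N_2}=\varphi_{N_1,N_3}$ on common domains), so this set is already closed under further applications of the clause and is finite. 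Granting $f_s$ well defined and compatible with Definition~\ref{d2}(iii), we get $s\in\mathbb{P}$, and then $\mathcal{M}_q,\mathcal{M}_r\subseteq\mathcal{M}_s$ and $f_q,f_r\subseteq f_s$ give $s\le q$ and $s\le r$. I expect the well-definedness of $f_s$ to be the main obstacle: one must rule out a point $\gamma$ with representations $\gamma=\varphi_{N_1,N_2}(\alpha)=\varphi_{N_3,N_4}(\alpha')$, where $\alpha,\alpha'\in\mathrm{dom}(f_r)\cup\mathrm{dom}(f_q)$ but $(f_r\cup f_q)(\alpha)\ne(f_r\cup f_q)(\alpha')$. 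Composing isomorphisms (legitimate by rigidity) and using Lemma~\ref{xi} to control which ordinals lie in which models, any such clash reduces to the situation $\alpha'=\varphi_{N,N'}(\alpha)$ with $N,N'\in\mathcal{M}_s$ of a common level; since isomorphisms preserve levels and all new models of $\mathcal{M}_s$ have level $<\delta_{M_0}$, the chain of isomorphisms linking $\alpha$ to $\alpha'$ stays at a single level and thus runs entirely through models of $\mathcal{M}_q$, or entirely through models of $\mathcal{M}_r$, or else factors through $M_0$ and its copies in $\mathcal{M}_q$. In the first two cases equality of values is built into the definition of a condition (Definition~\ref{d2}(iii) for $q$, respectively for $r$), and in the last case it follows from $f_p\subseteq f_r\cap f_q$ together with the element-chasing already carried out in the proof of Lemma~\ref{mhatlemma} (tracing a point of $\mathrm{dom}(f_p)$ up a tower $N'\in M'\in\mathcal{M}_q(\delta_{M_0})$ and back down). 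Once $f_s$ is well defined, Definition~\ref{d2}(iii) for $s$ follows directly from the construction of $f_s$.
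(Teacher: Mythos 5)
Your overall strategy (restrict $q$ into $M$, let $r\le q|_M$ be arbitrary in $\mathbb{P}\cap M$, then amalgamate) is the paper's strategy, but your choice of $q|_M:=\hat q$ from Lemma \ref{mhatlemma} creates a genuine gap at exactly the step you wave through with ``the new models coming from $\varphi_{M_0,K}[\mathcal{M}_r]$ supply the required witnesses.'' Consider $P\in\mathcal{M}_q$ with $\delta_P<\delta_{M_0}$ lying in some $K\in\mathcal{M}_q(\delta_{M_0})$ with $K\ne M_0$, and a new level $\beta\in dom(\mathcal{M}_r)\setminus dom(\mathcal{M}_q)$ with $\delta_P<\beta<\delta_{M_0}$. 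Clause (2) of Definition \ref{deftod} for $\mathcal{M}_s$ demands a model of level $\beta$ containing $P$; since models of $\mathcal{M}_r$ are subsets of $M_0$, the only candidates are of the form $\varphi_{M_0,K}(L)$ with $L\in\mathcal{M}_r(\beta)$, and $P\in\varphi_{M_0,K}(L)$ forces $\varphi_{K,M_0}(P)\in L$. To get such an $L$ from clause (2) applied inside $\mathcal{M}_r$ you need $\varphi_{K,M_0}(P)\in\mathcal{M}_r$, but nothing in clauses (1)--(8) of Lemma \ref{mhatlemma} puts this projection into $\hat{\mathcal{M}}_q$: clause (3) only gives $\mathcal{M}_q\cap M\subseteq\hat{\mathcal{M}}_q$ (and $P\notin M_0$), clause (4) only gives \emph{some} isomorphic model at each level, and clause (5) ($M_0$-fullness) speaks only of models already in the union and in $M_0$. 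So for an arbitrary $r\le\hat q$ there may simply be no member of $\mathcal{M}_r(\beta)$ containing $\varphi_{K,M_0}(P)$, and your amalgamated matrix need not be in $\mathbb{P}_{\in}^M$. Invoking ``the standard argument behind Lemma \ref{l2}'' does not help, because that argument is run with the full projection of the condition into $M_0$, which is precisely what $\hat{\mathcal{M}}_q$ is not guaranteed to be.

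This is why the paper does not use $\hat q$ as the strongly generic restriction: it defines $\mathcal{M}_{q\restriction M}$ to be the set of \emph{all} copies $\varphi_{N^w_l,M_0}(N^w_i)$ of models of $\mathcal{M}_q$ along $(\hat{\mathcal{M}}_q\cup\mathcal{M}_q)$-paths ending in $\mathcal{M}_q(\delta_{M_0})$, and sets $q\restriction M=\langle\mathcal{M}_{q\restriction M},f_q\restriction\beta_M\rangle$. Then every $P\in\mathcal{M}_q$ of level below $\delta_{M_0}$ is of the form $\varphi_{M_0,K}(L)$ with $L\in\mathcal{M}_{q\restriction M}\subseteq\mathcal{M}_r$, so witnesses at the new levels of $\mathcal{M}_r$ can be transported by $\varphi_{M_0,K}$, and the case analysis for Definition \ref{deftod}(2) closes. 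Your treatment of the function part (closing $f_r\cup f_q$ under Definition \ref{d2}(iii) and checking coherence by composing isomorphisms) matches the paper's $\bar f$ and is fine in outline, though it too silently uses that the problematic identifications factor through models of $\mathcal{M}_r$ containing the relevant projected ordinals, which again is cleanest once $r$ extends the full projection $q\restriction M$ rather than just $\hat q$. So: replace $q|_M=\hat q$ by the path-projection restriction (or prove separately that $\hat{\mathcal{M}}_q$ can be chosen to contain all such projections), and the rest of your argument goes through essentially as in the paper.
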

\begin{proof}
Let $D$ be an open dense subset of $\mathbb{P} \cap M$ and $q=\langle \mathcal{M}_q, f_q \rangle \leq p$. We have to show that $q$ is compatible with some element of $D$. Let $\hat{\mathcal{M}}_q$ be as in Lemma \ref{mhatlemma}. Let $X$ be the set of all $(\hat{\mathcal{M}}_q \cup \mathcal{M}_q)$-paths $w=\{N_0^w, \dots, N_l^w\}$ such that $N_l^w \in \mathcal{M}_q(\delta_{M_0})$, which gives $N_l^w$ and $M_0$ are isomorphic. Set $$\mathcal{M}_{q \restriction M} =\{ \varphi_{N_l^w, M_0}(N_i^w): w=\langle N_0^w, \dots, N_l^w \rangle \in X \land i <l \}.$$

Note that by the construction, $\hat{\mathcal{M}}_q \subseteq \mathcal{M}_{q \restriction M}$, $\dom{\mathcal{M}_{q \restriction M}}=\dom{\mathcal{M}_q} \cap M$ and it is easy to see that $\mathcal{M}_{q \restriction M} \in \mathbb{Q} \cap M$.
By an argument similar to the proof of Lemma \ref{mhatlemma}, $q{\restriction M}=\langle \mathcal{M}_{q \restriction M}, f_q{\restriction \beta_M} \rangle \in \mathbb{P} \cap M$.
Since $D \subseteq \mathbb{P} \cap M$ is an open dense set, we can find $r \in D \cap M$ such that $r \leq q{\restriction M}$. We now define $\bar{q}=\langle \bar{\mathcal{M}}, \bar{f} \rangle$ where:
\begin{itemize}
 \item $\bar{\mathcal{M}}=\mathcal{M}_r \cup \mathcal{M}_q \cup \{\varphi_{M_0, N}(K): N \in \mathcal{M}_q(\delta_{M_0}) \land K \in \mathcal{M}_r \}$, and
 \item $\bar{f}=f_r \cup f_q \cup \{\langle \varphi_{N', N^{\prime\prime}}(\alpha), f_r(\alpha) \rangle :\alpha \in \dom{f_r}, N' , N^{\prime\prime} \in \bar{\mathcal{M}} \land \delta_{N^\prime}=\delta_{N^{\prime\prime}}\}.$
\end{itemize}
\begin{claim}
 $\bar{f}:\omega_2 \longrightarrow 2$ is a finite partial function.
 \end{claim}
\begin{proof} Let $x_1, x_2 \in \dom{\bar{f}}$ with $x_1=x_2$. Note that $f_q{\restriction {\beta_M}} \subseteq f_r$ and $\dom{f_r} \subseteq \beta_M$, so it is enough to consider the following two cases.

{\bf Case 1}: $x_1=\varphi_{N_0',N_0^{\prime\prime}}(\alpha_1)$ for some $\alpha_1 \in \dom{f_r}$ and $x_2=\varphi_{N_1', N_1^{\prime\prime}}(\alpha_2)$ for some $\alpha_2 \in \dom{f_r}$ where $\delta_{N_0'}=\delta_{N_0^{\prime\prime}}$ and $\delta_{N_1'}=\delta_{N_1^{\prime\prime}}$. Without loss of generality we can assume that $\delta_{N_0^{\prime\prime}}=\delta_{N_1^{\prime\prime}}$, since otherwise, suppose that $\delta_{N_0^{\prime\prime}} < \delta_{N_1^{\prime\prime}}$. So for some $N_2', N_2^{\prime\prime} \in \bar{\mathcal{M}}(\delta_{N_1^{\prime\prime}})$, $N_0' \in N_2'$, $N_0^{\prime\prime} \in N_2^{\prime\prime}$ and $\varphi_{N_2',N_2^{\prime\prime}}\restriction {N_0'}=\varphi_{N_0',N_0^{\prime\prime}}$.
 Thus $x_1=\varphi_{N_2',N_2^{\prime\prime}}(\alpha_1)$, where $\delta_{N_2^{\prime\prime}}=\delta_{N_1^{\prime\prime}}$,
 and we may replace $(N_0',N_0^{\prime\prime})$ by $(N_2',N_2^{\prime\prime})$.
 Then $\varphi_{N_0^\prime, N_1^\prime}(\alpha_1)=\alpha_2$, and hence
$$\bar{f}(x_1)=f_r(\alpha_1)=f_r(\varphi_{N_0^\prime, N_1^\prime}(\alpha_1))=f_r(\alpha_2)=\bar{f}(x_2).$$

{\bf Case 2}: $x_1 \in \dom{f_r}$ and $x_2=\varphi_{N_0',N_0^{\prime\prime}}(\alpha)$ for some $\alpha \in \dom{f_r}$. Again we can assume that for some $N' \in \bar{\mathcal{M}}(\delta_{N_0^{\prime\prime}})$ we have $x_1 \in N'$ and $x_2=\varphi_{N', N_0^{\prime\prime}}(x_1)$. Thus
$$\bar{f}(x_1)=f_r(x_1)= f_r(\varphi_{N', N_0^{\prime\prime}}(x_1)) =f_r(x_2)=\bar{f}(x_2).$$
\end{proof}
\begin{claim}
 $\bar{\mathcal{M}} \in \mathbb{Q}$.
 \end{claim}
\begin{proof} It suffices to show that $\bar{\mathcal{M}}$
 satisfies clause (2) of Definition \ref{deftod}. Suppose that $\alpha < \beta < \omega_1$ are in $\dom{\bar{\mathcal{M}}}$ and $N \in \bar{\mathcal{M}}(\alpha)$. If $\alpha \geq \delta_{M_0}$, then $\bar{\mathcal{M}}(\alpha)=\mathcal{M}_q(\alpha)$ and $\bar{\mathcal{M}}(\beta)= \mathcal{M}_q(\beta)$, hence there is $N' \in \mathcal{M}_q(\beta) = \bar{\mathcal{M}}(\beta)$ such that $N \in N'$.

Now suppose that $\alpha < \delta_{M_0}$. There are three subcases, depending on the relation between $\beta$ and $\delta_{M_0}$.

First suppose that $\beta < \delta_{M_0}$. If $N \in \mathcal{M}_r(\alpha)$, then we can find some $N' \in \mathcal{M}_r(\beta) \subseteq \bar{\mathcal{M}}(\beta)$ such that $N \in N'$ and we are done. Otherwise, $N=\varphi_{M_0, N'}(K),$ where $N' \in \mathcal{M}_q(\delta_{M_0})$ and $K \in \mathcal{M}_r$.
$N \in \bar{\mathcal{M}}(\alpha)$. Let $K^\prime \in \mathcal{M}_r(\beta)$ be such that $K \in K^\prime.$ Then $\varphi_{M_0, N'}(K^\prime) \in \bar{\mathcal{M}}(\beta)$
and $N \in \varphi_{M_0, N'}(K^\prime).$

Now let $\beta = \delta_{M_0}$. If $N \in \mathcal{M}_r(\alpha)$, then $N \in M_0 \in \bar{\mathcal{M}}(\beta)$, otherwise, $N=\varphi_{M_0, N'}(K),$ where $N' \in \mathcal{M}_q(\delta_{M_0})$ and $K \in \mathcal{M}_r$. But then $N \in N'$ and we are done again.

Finally suppose that $\beta > \delta_{M_0}$. Then we can find some $N' \in \bar{\mathcal{M}}(\delta_{M_0})$ and $N'' \in \bar{\mathcal{M}}(\beta)$
such that $N \in N' \in N''$. Thus $N \in N''$ and we are done
\end{proof}
\begin{claim}
 $\bar{q}= \langle \bar{\mathcal{M}}, \bar{f} \rangle \in \mathbb{P}$.
 \end{claim}
 \begin{proof}
By the previous claims, $\bar{f}$ is a finite partial function and $\bar{\mathcal{M}} \in \mathbb{Q}$. Let $\alpha \in N_1 \cap \dom{\bar{f}}$ and $N_1 \cong N_2$,
where $N_1, N_2 \in \bar{\mathcal{M}}$. We have to show that $ \varphi_{N_1, N_2}(\alpha) \in \dom{\bar{f}}$
and $\bar{f}(\varphi_{N_1, N_2}(\alpha))=\bar{f}(\alpha)$.

 If $\alpha \in \dom{f_r}$, then $\langle \varphi_{N_1, N_2}(\alpha), f_r(\alpha) \rangle \in \bar{f}$, and we are done. If $\alpha \in \dom{f_q} \setminus \dom{f_r}$, then $\alpha \notin \beta_M$ and we must have $N_1, N_2 \in \mathcal{M}_q \setminus \mathcal{M}_r$. So $\varphi_{N_1, N_2}(\alpha) \in \dom{f_q}$ and $f_q(\varphi_{N_1, N_2}(\alpha))=f_q(\alpha)$, which implies $\varphi_{N_1, N_2}(\alpha) \in \dom{\bar{f}}$ and $\bar{f}(\varphi_{N_1, N_2}(\alpha))=\bar{f}(\alpha)$. Finally if $\alpha=\varphi_{N', N_1}(\beta)$ for some $\beta \in \dom{f_r}$ and $N^\prime \in \bar{\mathcal{M}}$ with $N^\prime \simeq N_1$, then
 $ \varphi_{N_1, N_2}(\alpha)=\varphi_{N_1, N_2}\varphi_{N', N_1}(\beta)= \varphi_{N^\prime, N_2}(\beta) \in \dom{\bar{f}}$ and
 \[
 \bar{f}(\varphi_{N_1, N_2}(\alpha))=\bar{f}(\varphi_{N^\prime, N_2}(\beta))=f_r(\beta)=f_r(\alpha)=\bar{f}(\alpha).
 \]
 The claim follows.
 \end{proof}
It is evident that $\bar{q}$ extends both $q$ and $r$, and hence $q$ and $r$ are compatible. The lemma follows.
\end{proof}
We have the following easy lemma.
\begin{lemma}
\label{l3}
Let $\theta > \omega_2$ be a large enough regular cardinal and $M\prec H(\theta)$ countable. If $p \in \mathbb{P} \cap M$, then $p'=\langle \mathcal{M}_p \cup \{M\cap H(\omega_2)\}, f_p \rangle$ is a condition.
\end{lemma}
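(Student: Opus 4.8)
The plan is to verify directly that $p'$ meets clauses $(i)$--$(iii)$ of Definition \ref{d2}; most of this is routine, so I will concentrate on the one point that needs care. Write $M_0 = M \cap H(\omega_2)$. First I would use the standard fact that, for $\theta$ large enough and $\lhd \in M$, one has $M_0 \prec H(\omega_2)$; since $M$ is countable this yields $M_0 \in \mathcal{S}$, so $\mathcal{M}_{p'} = \mathcal{M}_p \cup \{M_0\}$ is again a finite subset of $\mathcal{S}$, and clause $(ii)$ needs nothing because $f_{p'} = f_p$ is already a finite partial function $\omega_2 \longrightarrow 2$.

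The key observation, which I would isolate first, is that every $N \in \mathcal{M}_p$ is an \emph{element} of $M_0$ and satisfies $\delta_N < \delta_{M_0}$. Indeed, $p \in \mathbb{P} \cap M$ gives $\mathcal{M}_p \in M$; as $\mathcal{M}_p$ is finite, elementarity forces $\mathcal{M}_p \subseteq M$, and since each such $N$ also lies in $H(\omega_2)$ (being a countable subset of $H(\omega_2)$) we get $N \in M \cap H(\omega_2) = M_0$. Hence $\delta_N = N \cap \omega_1$ is a countable ordinal belonging to $M_0$, so $\delta_N \in M_0 \cap \omega_1 = \delta_{M_0}$, i.e. $\delta_N < \delta_{M_0}$. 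In particular $M_0 \notin \mathcal{M}_p$ and $\delta_{M_0}$ is the largest element of $dom(\mathcal{M}_{p'})$, not lying in $dom(\mathcal{M}_p)$.

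Granting this, clause $(i)$ reduces to checking Definition \ref{deftod} for $\mathcal{M}_{p'}$. For condition $(1)$: two members of $\mathcal{M}_{p'}$ with the same $\delta$ cannot be $M_0$ together with some $N \in \mathcal{M}_p$ (their $\delta$'s differ), so both lie in $\mathcal{M}_p$ and are isomorphic because $\mathcal{M}_p \in \mathbb{P}_{\in}^M$. For condition $(2)$: let $N' \in \mathcal{M}_{p'}$ and $\delta \in dom(\mathcal{M}_{p'})$ with $\delta_{N'} < \delta$; if $\delta < \delta_{M_0}$ then $\delta \in dom(\mathcal{M}_p)$ and $N' \in \mathcal{M}_p$, so condition $(2)$ for $\mathcal{M}_p$ provides the witness, while if $\delta = \delta_{M_0}$ then again $N' \in \mathcal{M}_p$ and the observation gives $N' \in M_0 \in \mathcal{M}_{p'}(\delta_{M_0})$; no further case occurs since $\delta_{M_0} = \max dom(\mathcal{M}_{p'})$. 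Finally, clause $(iii)$: if $N_1, N_2 \in \mathcal{M}_{p'}$ have $\delta_{N_1} = \delta_{N_2}$, then, as for condition $(1)$, either $N_1 = N_2 = M_0$ (nothing to check, $\varphi_{M_0,M_0}$ being the identity) or $N_1, N_2 \in \mathcal{M}_p$, and then the coherence requirement for $f_{p'} = f_p$ is inherited from $p \in \mathbb{P}$. Thus $p' \in \mathbb{P}$. I do not expect a genuine obstacle here; the only delicate step is the elementarity argument of the second paragraph, which is exactly what makes $M_0$ sit strictly above every member of $\mathcal{M}_p$ in the $\delta$-ordering and contain each of them, as Definition \ref{deftod}$(2)$ demands.
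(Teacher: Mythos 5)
Your verification is correct and is precisely the routine argument the paper omits (it states Lemma \ref{l3} without proof as an ``easy lemma''): the key point that every $N \in \mathcal{M}_p$ lies in $M_0 = M \cap H(\omega_2)$ with $\delta_N < \delta_{M_0}$, obtained from $p \in M$ and finiteness of $\mathcal{M}_p$, is exactly what makes $M_0$ the needed top-level witness for clause $(2)$ of Definition \ref{deftod} and renders clauses $(1)$ and $(iii)$ vacuous for the new pair. Your side remark that one needs $M_0 \in \mathcal{S}$, i.e.\ $M_0 \prec \langle H(\omega_2), \in, \lhd \rangle$ (so $\lhd$, or equivalently $\mathbb{P}$, should be in $M$), is a legitimate small precision that the paper's statement leaves implicit.
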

Putting all things together, we get the following
\begin{corollary}
\label{l31}
$\mathbb{P}$ is strongly proper
\end{corollary}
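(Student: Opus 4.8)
The plan is to derive strong properness of $\mathbb{P}$ by combining Lemmas \ref{mhatlemma}, \ref{l22}, and \ref{l3} in the expected way. Let $\theta>\omega_2$ be a large enough regular cardinal, let $M\prec H(\theta)$ be countable with $\mathbb{P}\in M$, and let $p\in\mathbb{P}\cap M$ be arbitrary. I need to produce $q\leq p$ that is $(M,\mathbb{P})$-strongly generic.

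First I would pass to the extended condition $p'=\langle\mathcal{M}_p\cup\{M\cap H(\omega_2)\},f_p\rangle$, which is a condition by Lemma \ref{l3}. Writing $M_0=M\cap H(\omega_2)$, note $M_0\in\mathcal{S}$ (this is the standard fact that the $H(\omega_2)$-part of a countable elementary submodel of $H(\theta)$ is an elementary submodel of $H(\omega_2)$, with $\delta_{M_0}=\delta_M$ and $\beta_{M_0}=\beta_M$), so $p'$ makes sense and clearly $p'\leq p$. Moreover $M_0\in\mathcal{M}_{p'}$ by construction, and since $p\in M$ we have $f_p\in M$, hence $f_{p'}=f_p\in M$. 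Thus $p'$ satisfies exactly the hypotheses of Lemma \ref{l22}: $p'=\langle\mathcal{M}_{p'},f_{p'}\rangle\in\mathbb{P}$ with $M\cap H(\omega_2)=M_0\in\mathcal{M}_{p'}$ and $f_{p'}\in M$. Lemma \ref{l22} then yields directly that $p'$ is $(M,\mathbb{P})$-strongly generic. Taking $q=p'$ completes the verification, since $q\leq p$ and $q$ is strongly generic.

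The one step requiring a word of care is the claim that $M_0=M\cap H(\omega_2)$ is genuinely an element of $\mathcal{S}$, i.e.\ that $\langle M_0,\in,\lhd\cap M_0^2\rangle\prec\langle H(\omega_2),\in,\lhd\rangle$ and that $M_0$ is countable with the right values of $\delta$ and $\beta$. Countability is immediate from countability of $M$. For elementarity: since $\theta$ is large enough, $H(\omega_2)\in M$ and $M$ knows (with its well-ordering $\lhd$, which is definable in $H(\theta)$ from parameters in $M$) the theory of $H(\omega_2)$; a Tarski--Vaught argument using Skolem functions for $H(\omega_2)$ that lie in $M$ shows $M\cap H(\omega_2)\prec H(\omega_2)$. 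Then $\delta_{M_0}=M_0\cap\omega_1=M\cap\omega_1=\delta_M$ and $\beta_{M_0}=M_0\cap\omega_2=M\cap\omega_2=\beta_M$ since $\omega_1,\omega_2\subseteq H(\omega_2)$. This is precisely the content of the lemma that appears commented out (inside \verb|\iffalse ... \fi|) earlier in the text, so it is being taken as known background; I would either invoke it or insert a one-line proof.

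I do not expect any serious obstacle: all the combinatorial work — building $\hat{\mathcal{M}}$, amalgamating the projected condition $q\restriction M$ with an arbitrary $q\leq p'$, and checking that the amalgam is a legitimate condition — has already been discharged inside the proofs of Lemmas \ref{mhatlemma} and \ref{l22}. The corollary is purely a matter of noting that Lemma \ref{l3} supplies the extension $p'$ with $M_0\in\mathcal{M}_{p'}$, and Lemma \ref{l22} then applies verbatim. The only mild subtlety worth flagging explicitly in the write-up is that we must start from $p\in\mathbb{P}\cap M$ (not an arbitrary $p\in\mathbb{P}$) so that $f_p\in M$, which is exactly what Definition \ref{d1}(2) allows us to assume.
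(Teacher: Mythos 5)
Your proposal is correct and is essentially identical to the paper's own proof: extend $p$ to $p'=\langle\mathcal{M}_p\cup\{M\cap H(\omega_2)\},f_p\rangle$ via Lemma \ref{l3}, then apply Lemma \ref{l22} to conclude $p'$ is $(M,\mathbb{P})$-strongly generic. Your added remarks on $M\cap H(\omega_2)\in\mathcal{S}$ and on $f_p\in M$ are just the routine background the paper leaves implicit.
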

\begin{proof}
Let $\theta$ be large enough regular, $M \prec H(\theta)$ with $\mathbb{P} \in M$ and let $p \in \mathbb{P} \cap M$. Set $p'=\langle \mathcal{M}_p \cup \{M\cap H(\omega_2)\}, f_p \rangle$. By Lemma \ref{l3}, $p^\prime$ is a condition and by Lemma
\ref{l22}, $p^\prime$ is a strongly $(M, \mathbb{P})$-generic condition.
\end{proof}
In particular, it follows that forcing with $\mathbb{P}$ preserves $\aleph_1$.
\begin{lemma}
\label{l4}
$\mathbb{P}$ satisfies the $\aleph_2$-c.c.
\end{lemma}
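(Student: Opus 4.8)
The plan is to show that no subset of $\mathbb{P}$ of size $\aleph_2$ is an antichain. So I would fix conditions $\{p_\alpha:\alpha<\omega_2\}$, $p_\alpha=\langle\mathcal{M}_{p_\alpha},f_{p_\alpha}\rangle$, and, using the $\GCH$ (and $|H(\omega_2)|=\aleph_2$), pass several times to a subset of size $\aleph_2$ on which the conditions become "isomorphic over a common part", so that the coordinatewise union of any two survivors is again a condition. The point to keep track of throughout is that the union must satisfy Definition \ref{d2}(iii), i.e.\ the finite function must stay coherent with respect to the larger matrix of models.

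Concretely, I would thin $\{\alpha<\omega_2\}$ to a set of size $\aleph_2$ realising, successively: $(a)$ $dom(\mathcal{M}_{p_\alpha})=D$ is a fixed finite subset of $\omega_1$; $(b)$ after listing $\mathcal{M}_{p_\alpha}=\{N^\alpha_0,\dots,N^\alpha_{n-1}\}$ in $\lhd$-increasing order, the countable structures $\mathcal{A}_\alpha:=\langle\,\bigcup\mathcal{M}_{p_\alpha},\in,\lhd,N^\alpha_0,\dots,N^\alpha_{n-1},\,dom(f_{p_\alpha})\cap\bigcup\mathcal{M}_{p_\alpha},\,f_{p_\alpha}{\restriction}\bigcup\mathcal{M}_{p_\alpha}\,\rangle$ (the $N^\alpha_i$ as unary predicates) all have one isomorphism type — since $\lhd$ makes each $\mathcal{A}_\alpha$ rigid, the isomorphism $\pi_{\alpha,\beta}:\mathcal{A}_\alpha\to\mathcal{A}_\beta$ is then unique and $\pi_{\alpha,\beta}{\restriction}N^\alpha_i=\varphi_{N^\alpha_i,N^\beta_i}$; in particular $\delta_{N^\alpha_i}$ and the $\in$-pattern of the $N^\alpha_i$ depend only on the indices; $(c)$ the finite sets $\mathcal{M}_{p_\alpha}$ and the countable sets $C_\alpha:=(\bigcup\mathcal{M}_{p_\alpha})\cup dom(f_{p_\alpha})$ each form a $\Delta$-system (available since the $\GCH$ gives $\aleph_1^{\aleph_0}=\aleph_1<\aleph_2$), with roots $\{N^\alpha_i:i\in I^*\}$ and $C$; $(d)$ $dom(f_{p_\alpha})\cap C=F$ and $f_{p_\alpha}{\restriction}F$ are fixed. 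From $(c),(d)$ one extracts, for $\alpha\ne\beta$: $dom(f_{p_\alpha})\cap dom(f_{p_\beta})=F$, $f_{p_\alpha}{\restriction}F=f_{p_\beta}{\restriction}F$, and crucially $(dom(f_{p_\alpha})\setminus F)\cap\bigcup\mathcal{M}_{p_\beta}=\emptyset$, since any common element lies in $C_\alpha\cap C_\beta=C$, hence in $dom(f_{p_\alpha})\cap C=F$.

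Next I would fix distinct $\alpha,\beta$ in the final set and put $\bar p:=\langle\mathcal{M}_{p_\alpha}\cup\mathcal{M}_{p_\beta},\,f_{p_\alpha}\cup f_{p_\beta}\rangle$; it remains to verify $\bar p\in\mathbb{P}$, for then $\bar p$ witnesses compatibility of $p_\alpha$ and $p_\beta$. That $\mathcal{M}_{p_\alpha}\cup\mathcal{M}_{p_\beta}\in\mathbb{P}_{\in}^M$ is immediate: the domain is still $D$, so clause $(2)$ of Definition \ref{deftod} for a model of $\mathcal{M}_{p_\alpha}$ is witnessed inside $\mathcal{M}_{p_\alpha}$ and likewise for $\mathcal{M}_{p_\beta}$; and clause $(1)$ holds because at each height the models of $\mathcal{M}_{p_\alpha}$ are pairwise isomorphic, so are those of $\mathcal{M}_{p_\beta}$, and by the common isomorphism type a model of $\mathcal{M}_{p_\alpha}$ is isomorphic to the matching one of $\mathcal{M}_{p_\beta}$. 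By the previous paragraph $f_{p_\alpha}\cup f_{p_\beta}$ is a finite partial function. The only real point is Definition \ref{d2}(iii): given isomorphic $N_1,N_2$ in $\mathcal{M}_{p_\alpha}\cup\mathcal{M}_{p_\beta}$ and $\gamma\in dom(f_{p_\alpha}\cup f_{p_\beta})\cap N_1$, one must see $\varphi_{N_1,N_2}(\gamma)\in dom(f_{p_\alpha})\cup dom(f_{p_\beta})$ with the same value. If $N_1\in\mathcal{M}_{p_\beta}$ but $\gamma\in dom(f_{p_\alpha})$, then $\gamma\in\bigcup\mathcal{M}_{p_\beta}$, so $\gamma\in F\subseteq dom(f_{p_\beta})$; hence we may assume $N_1\in\mathcal{M}_{p_\alpha}$ and $\gamma\in dom(f_{p_\alpha})$. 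If also $N_2\in\mathcal{M}_{p_\alpha}$, this is Definition \ref{d2}(iii) for $p_\alpha$. If $N_2\in\mathcal{M}_{p_\beta}$, write $N_1=N^\alpha_i$, $N_2=N^\beta_j$ (so $\delta_{N^\alpha_i}=\delta_{N^\beta_j}$ as $N_1\cong N_2$); then
\[
\varphi_{N_1,N_2}=\varphi_{N^\alpha_i,\,N^\beta_j}=\varphi_{N^\beta_i,\,N^\beta_j}\circ\varphi_{N^\alpha_i,\,N^\beta_i},
\]
where $\varphi_{N^\alpha_i,N^\beta_i}=\pi_{\alpha,\beta}{\restriction}N^\alpha_i$ carries $dom(f_{p_\alpha})\cap N^\alpha_i$ onto $dom(f_{p_\beta})\cap N^\beta_i$ preserving values (common isomorphism type), and $\varphi_{N^\beta_i,N^\beta_j}$ preserves membership in $dom(f_{p_\beta})$ and the values by Definition \ref{d2}(iii) for $p_\beta$. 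Thus $\varphi_{N_1,N_2}(\gamma)\in dom(f_{p_\beta})$ with value $f_{p_\alpha}(\gamma)$, as required. This is exactly the bookkeeping carried out in the two Claims inside the proof of Lemma \ref{l22}.

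The step I expect to be the main obstacle is getting the refinements right so that the bare union already suffices — above all the $\Delta$-system thinning of the countable sets $C_\alpha$, which is what forces the "new" part of $dom(f_{p_\alpha})$ to miss the models of $p_\beta$ and is the reason no auxiliary models need be added when amalgamating; this is where the $\GCH$ enters, via $\aleph_1^{\aleph_0}=\aleph_1$ (for the $\Delta$-system lemma on countable sets) and $2^{\aleph_0}=\aleph_1$ (so that there are only $\aleph_1$ isomorphism types of the structures $\mathcal{A}_\alpha$). Lemma \ref{xi} may be invoked along the way to see that the relevant membership relations among ordinals of the models are already determined by the fixed isomorphism types.
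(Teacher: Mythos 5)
Your proof is correct, and it follows the paper's overall strategy (thin an alleged antichain of size $\aleph_2$ using $\CH$ until any two conditions are compatible, the witness being essentially the union), but the amalgamation step is genuinely different. The paper thins only so that the finite sets $dom(f_{\alpha})$ form a $\Delta$-system with a fixed restriction to the root and so that the matrices $\mathcal{M}_{\alpha}$ have the same set of transitive collapses (this is where $|H(\omega_1)|=\aleph_1$ enters); with so little uniformity the pair $\langle \mathcal{M}_{p_\alpha}\cup\mathcal{M}_{p_\beta}, f_{p_\alpha}\cup f_{p_\beta}\rangle$ need not satisfy Definition \ref{d2}(iii), so the paper's compatibility witness enlarges the function, closing $f_{p_\alpha}\cup f_{p_\beta}$ under all isomorphisms $\varphi_{N,N'}$ between models of the combined matrix, and the verification that this closure is a well-defined function satisfying (iii) is left as ``easily seen.'' You instead thin harder: fixing the isomorphism type of the decorated structures $\mathcal{A}_\alpha$ and, crucially, running the generalized $\Delta$-system lemma on the countable sets $\bigl(\bigcup\mathcal{M}_{p_\alpha}\bigr)\cup dom(f_{p_\alpha})$ (this is where $\aleph_1^{\aleph_0}=\aleph_1$ is used, in place of the paper's counting of $H(\omega_1)$), which forces the ``new'' part of $dom(f_{p_\alpha})$ to miss $\bigcup\mathcal{M}_{p_\beta}$ entirely; as a result the bare union is already a condition, and your factorization $\varphi_{N_1,N_2}=\varphi_{N^\beta_i,N^\beta_j}\circ\varphi_{N^\alpha_i,N^\beta_i}$ together with (iii) for $p_\beta$ correctly discharges the coherence requirement. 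What each buys: the paper's lighter thinning shifts the work onto the closure of the function (which it does not spell out), while your heavier thinning makes the amalgam trivial and makes explicit exactly the kind of bookkeeping the paper's Lemma \ref{l22} carries out elsewhere. Two cosmetic remarks: the rigidity via $\lhd$ is unnecessary, since an $\in$-isomorphism between well-founded extensional structures is unique, which already gives $\pi_{\alpha,\beta}\restriction N^\alpha_i=\varphi_{N^\alpha_i,N^\beta_i}$; and one should first fix $|\mathcal{M}_{p_\alpha}|=n$ (implicit in fixing the type), neither of which affects correctness.
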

\begin{proof}
Let $\{p_{\alpha}=\langle \mathcal{M}_{\alpha}, f_{\alpha} \rangle: \alpha < \omega_2\}$ be a collection of conditions. For each $\alpha < \omega_2, \dom{f_{\alpha}}$ is a finite subset of $\omega_2$, so by the $\Delta$-system lemma, we may assume that $\{\dom{f_{\alpha}}:\alpha < \omega_2\}$ forms a $\Delta$-system with root $d \subset \omega_2$, so that for every $\alpha \neq \beta, \dom{f_{\alpha}} \cap \dom{f_{\beta}}=d$. Since there are only finitely many functions $f: d \longrightarrow 2$,
by shrinking the sequence, we may also assume that $f_{\alpha}{\restriction d}=g$ for some fixed $g:d \longrightarrow 2$ and all $\alpha < \omega_2$.

For each $\alpha < \omega_2$ set
$$\bar{\mathcal{M}}_{\alpha}=\{\bar{M}: \exists M \in \mathcal{M}_{\alpha}, \bar{M} \ \text{is the transitive collapse of}\ M\} \in H(\omega_1).$$
Clearly for every $\alpha < \beta < \omega_2$, if $\bar{\mathcal{M}}_{\alpha}=\bar{\mathcal{M}}_{\beta}$, then $\mathcal{M}_{\alpha} \cup \mathcal{M}_{\beta} \in \mathbb{Q}$.

By $\CH,$ $|H(\omega_1)|=\aleph_1$, so by shrinking the sequence of conditions further, we may assume that $\bar{\mathcal{M}}_{\alpha}=\bar{\mathcal{M}}_{\beta}$
for all $\alpha < \beta < \omega_2$.

We now show that for $\alpha < \beta < \omega_2$, the conditions $p_\alpha$ and $p_\beta$ are compatible. Thus fix $\alpha < \beta < \omega_2$.
Let $q=\langle \mathcal{M}_q, f_q \rangle$ where
\begin{itemize}
\item $\mathcal{M}_q=\mathcal{M}_{p_{\alpha}} \cup \mathcal{M}_{p_{\beta}}$, and
 \item $f_q=f_{p_{\alpha}} \cup f_{p_{\beta}} \cup \{\langle \varphi_{N,N'}(\gamma), (f_{p_{\alpha}} \cup f_{p_{\beta}})(\gamma) \rangle : N, N' \in \mathcal{M}_q \land N \cong N' \land \gamma \in \dom{f_{p_{\alpha}} \cup f_{p_{\beta}}} \}$.
 \end{itemize}
 It is easily seen that $q$ is a condition which extends both $p_{\alpha}$ and $p_{\beta}$.
\end{proof}
Using corollary \ref{l31} and Lemma \ref{l4}, we get the following.
\begin{corollary}
\label{c1}
The forcing notion $\mathbb{P}$ preserves all cardinals.
\end{corollary}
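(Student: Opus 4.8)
The plan is to read the statement off from the two results just established, namely that $\mathbb{P}$ is strongly proper (Corollary~\ref{l31}) and that $\mathbb{P}$ satisfies the $\aleph_2$-c.c. (Lemma~\ref{l4}). These two facts cover disjoint ranges of cardinals whose union is everything, so the corollary follows by combining them; there is essentially no new content, all the genuine work having gone into the two cited results.

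First I would dispose of the small cardinals. The cardinal $\aleph_0$ is never collapsed by forcing, so it is automatically preserved. For $\aleph_1$, I would observe that an $(M,\mathbb{P})$-strongly generic condition is in particular $(M,\mathbb{P})$-generic in the usual sense, so strong properness implies ordinary properness; and proper forcing notions preserve $\aleph_1$ by Shelah's theorem \cite{shelah80}. Hence $\aleph_1$ remains a cardinal in the generic extension.

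Next I would treat cardinals $\ge \aleph_2$ via the chain condition, by the standard argument: if some $\lambda \ge \aleph_2$ were collapsed in the extension, there would be a name for a function from some $\mu<\lambda$ whose range is cofinal in (indeed all of) $\lambda$; by the $\aleph_2$-c.c., for each $\xi<\mu$ the set of possible values of this name at $\xi$ has size $<\aleph_2\le\lambda$, so the range would be contained in a ground-model set of size at most $\max(\mu,\aleph_1)<\lambda$, contradicting that it is cofinal in $\lambda$. Thus no cardinal $\ge\aleph_2$ is collapsed (and cofinalities $\ge\aleph_2$ are preserved as well).

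Finally I would simply combine these: the infinite cardinals split into $\{\aleph_0,\aleph_1\}$, preserved by properness together with the triviality for $\omega$, and those $\ge\aleph_2$, preserved by the $\aleph_2$-c.c.; hence $\mathbb{P}$ preserves all cardinals. I do not expect any obstacle here beyond correctly invoking Corollary~\ref{l31} and Lemma~\ref{l4}. (For completeness, one could add that the $\GCH$-preservation promised in Theorem~\ref{t1} needs the further ingredient of a nice-name counting argument combined with Lemma~\ref{l2}, but that is outside the scope of the present corollary.)
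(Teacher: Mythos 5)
Your proposal is correct and is exactly the paper's argument: the paper derives the corollary directly from Corollary \ref{l31} (strong properness, hence properness, preserving $\aleph_1$) together with Lemma \ref{l4} (the $\aleph_2$-c.c., preserving all cardinals $\geq\aleph_2$), just as you do. The extra details you supply (the standard c.c.\ argument and the triviality for $\aleph_0$) are fine but add nothing beyond what the paper leaves implicit.
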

We now show that forcing with $\mathbb{P}$ preserves the $\GCH$. We only need to consider the case of $\CH.$
\begin{lemma}
 Suppose $G$ is a $V$-generic filter over $ \mathbb{P}$. Let $M$ and $M'$ be countable isomorphic elementary substructures of $H(\theta)$, for a large enough regular cardinal $\theta$, with $\mathbb{P} \in M \cap M'$, and let $p \in \mathbb{P} \cap M$. Set $M_0=M \cap H(\omega_2)$ and $M_0^\prime=M' \cap H(\omega_2)$. Then $$p_{MM'}=\langle \mathcal{M}_p \cup \{M_0, M_0'\}, f_p \cup \{\langle \varphi_{M',M}(\alpha), f_p(\alpha) \rangle : \alpha \in \dom{f_p} \cap M'\} \rangle$$ is a condition, and it forces $\check{\varphi}_{M, M'}[\dot{G} \cap \check{M}]=\dot{G} \cap \check{M'}$.
\end{lemma}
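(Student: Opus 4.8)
The plan is to verify the two assertions separately: first that $p_{MM'}$ is a legitimate condition of $\mathbb{P}$, and then that it forces the stated equality of the generic filter's traces. For the first part, I would check the three clauses of Definition \ref{d2}. Clause (i): we must see $\mathcal{M}_p \cup \{M_0, M_0'\} \in \mathbb{P}_\in^M$. Since $M \cong M'$ we have $M_0 \cong M_0'$ (as $M_0 = M \cap H(\omega_2)$ and the isomorphism restricts appropriately), so clause (1) of Definition \ref{deftod} holds once we note $p \in \mathbb{P} \cap M$ forces all members of $\mathcal{M}_p$ to have $\delta < \delta_{M_0} = \delta_{M_0'}$, so no $\delta$-clash with the two new top models occurs. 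Clause (2) of Definition \ref{deftod}: any $N \in \mathcal{M}_p$ satisfies $N \in M_0$ (by elementarity, since $p \in M$ and $N \in \mathcal{M}_p$ with $\mathcal{M}_p$ finite, each $N \in \mathcal{M}_p$ lies in $M$, hence in $M_0$), and likewise $\varphi_{M',M}$ applied to members of $\mathcal{M}_p \cap M'$ — but here $\mathcal{M}_p \subseteq M_0$ already, so in fact all of $\mathcal{M}_p$ sits below $M_0$ and below $M_0'$; a short argument using Lemma \ref{xi} handles the new models $M_0, M_0'$ relative to each $\delta \in dom(\mathcal{M}_p)$. Clause (ii) is immediate: $f_p$ is finite and the added pairs $\langle \varphi_{M',M}(\alpha), f_p(\alpha)\rangle$ for $\alpha \in dom(f_p) \cap M'$ keep the domain finite. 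The main point is clause (iii): for $N_1 \cong N_2$ in $\mathcal{M}_p \cup \{M_0, M_0'\}$ with $\delta_{N_1} = \delta_{N_2}$ and $\alpha \in dom(f_{p_{MM'}}) \cap N_1$, we need $\varphi_{N_1,N_2}(\alpha) \in dom(f_{p_{MM'}})$ with equal value. The only genuinely new case is $\{N_1,N_2\} = \{M_0, M_0'\}$: if $\alpha \in dom(f_p) \cap M_0'$, then since $dom(f_p) \subseteq M$ (as $f_p \in M$, hence $dom(f_p) \subseteq \beta_M$, and each of its finitely many elements is in $M$), in fact $dom(f_p) \cap M_0' \subseteq M_0 \cap M_0'$, and $\varphi_{M_0',M_0}$ agrees with $\varphi_{M',M}$ on $M_0'$; so $\varphi_{M_0',M_0}(\alpha)$ is precisely the first coordinate of one of the added pairs, with value $f_p(\alpha)$, as required. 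For the symmetric direction (starting from $\alpha \in dom(f_p) \cap M_0$) one checks $\varphi_{M_0,M_0'}(\alpha) = (\varphi_{M',M})^{-1}(\alpha)$ is in the domain with the right value, possibly after observing $\alpha$ itself was added via $M'$; this is a routine chase through the definitions, using that $f_p \in M \cap M'$ makes $dom(f_p)$ and the function $f_p$ itself symmetric under $\varphi_{M,M'}$ when restricted appropriately.

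For the forcing statement, I would argue by genericity and strong properness. Let $G$ be $V$-generic with $p_{MM'} \in G$. By Corollary \ref{l31} and Lemma \ref{l22}, the condition $p_{MM'}$ — having $M_0 \in \mathcal{M}_{p_{MM'}}$ and $f_{p_{MM'}} \in$ (well, not quite in $M$, but extending to a strongly generic condition) — is $(M,\mathbb{P})$-strongly generic and $(M',\mathbb{P})$-strongly generic; more carefully, any condition below $p_{MM'}$ in $G$ has a restriction to $M$ and to $M'$ lying in $G$, by the characterization in Lemma \ref{l1}. The key claim is: for every $q \leq p_{MM'}$, the restriction map $q \mapsto q{\restriction M}$ and $q \mapsto q{\restriction M'}$ (built as in the proof of Lemma \ref{l22}) commute with $\varphi_{M,M'}$, i.e. $\varphi_{M,M'}$ carries $q{\restriction M}$ to $q{\restriction M'}$. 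This is because the construction of $q{\restriction M}$ in Lemma \ref{l22} is canonical — it uses only $\mathcal{M}_q$, the model $M_0$, and $f_q{\restriction \beta_M}$ — and the analogous data for $M'$ is exactly the $\varphi_{M,M'}$-image of the data for $M$, since $q \leq p_{MM'}$ forces the parts of $\mathcal{M}_q$ and $f_q$ inside $M$ and inside $M'$ to mirror each other under $\varphi_{M,M'}$. Granting this, if $r \in G \cap M$, pick $q \in G$ with $q \leq r$ and $q \leq p_{MM'}$; then $q{\restriction M} \in G \cap M$ forces $r \in \dot G \cap \check M$, and $\varphi_{M,M'}(r) \supseteq q{\restriction M'} \in G$ (since $q{\restriction M'} \leq \varphi_{M,M'}(r)$... ), wait — more precisely $\varphi_{M,M'}$ maps $q{\restriction M}$ below $\varphi_{M,M'}(r)$, and $q{\restriction M'} = \varphi_{M,M'}(q{\restriction M}) \leq \varphi_{M,M'}(r)$, and $q{\restriction M'} \in G$, so $\varphi_{M,M'}(r) \in G \cap M'$. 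This gives $\varphi_{M,M'}[G \cap M] \subseteq G \cap M'$, and the reverse inclusion follows by the symmetric argument using $\varphi_{M',M}$.

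The step I expect to be the main obstacle is establishing that the canonical restriction operation of Lemma \ref{l22} is genuinely equivariant under $\varphi_{M,M'}$ — i.e. that $q{\restriction M'} = \varphi_{M,M'}(q{\restriction M})$ for every $q \leq p_{MM'}$. The subtlety is that $q{\restriction M}$ is not literally $q$ intersected with $M$; it is reconstructed from paths in $\hat{\mathcal{M}}_q \cup \mathcal{M}_q$ terminating in a model isomorphic to $M_0$ (see the set $X$ and $\mathcal{M}_{q{\restriction M}}$ in the proof of Lemma \ref{l22}), and one must verify that replacing $M_0$ by $M_0'$ and applying $\varphi_{M_0,M_0'}$ throughout yields precisely $q{\restriction M'}$ — this needs the fact that $q \leq p_{MM'}$ forces a $\varphi_{M,M'}$-symmetry on all of $\mathcal{M}_q$ below level $\delta_{M_0}$, not merely on the part originally in $\mathcal{M}_p$. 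I would prove this by induction on $|\mathcal{M}_q \setminus \mathcal{M}_{p_{MM'}}|$, or more cleanly by showing directly that both $\mathcal{M}_{q{\restriction M}}$ and $\varphi_{M,M'}^{-1}[\mathcal{M}_{q{\restriction M'}}]$ satisfy the same defining property (being the unique element of $\mathbb{P}_\in^M \cap M$ obtained by the path-collapse construction) and invoking uniqueness. Once the equivariance is in hand, the rest is bookkeeping with generic filters.
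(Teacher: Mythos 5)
Your overall strategy is parallel to the paper's: in both, the crux is the equivariance claim $\varphi_{M,M'}(q{\restriction} M)=q{\restriction} {M'}$ for $q\leq p_{MM'}$, and your verification that $p_{MM'}$ is a condition is fine. The genuine gap is in how you pass from equivariance to the forcing statement. You assert that for $q\in G$ with $q\leq p_{MM'}$ the restrictions $q{\restriction} M$ and $q{\restriction} {M'}$ themselves belong to $G$, ``by the characterization in Lemma~\ref{l1}''. Lemma~\ref{l1} gives no such thing: it only says that every condition of $\mathbb{P}\cap M$ below $q{\restriction} M$ is compatible with $q$. Nor does upward closure of $G$ help, because in this forcing $q\not\leq q{\restriction} M$ in general: $\mathcal{M}_{q\restriction M}$ contains the reflected models $\varphi_{N_l^w,M_0}(N_i^w)$ arising from paths, and these need not lie in $\mathcal{M}_q$. (A secondary point: your claim that $p_{MM'}$ is $(M',\mathbb{P})$-strongly generic does not follow from Lemma~\ref{l22} as stated, since $f_{p_{MM'}}$ need not be an element of $M'$ --- $dom(f_p)$ may contain ordinals outside $M'$.)

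The paper avoids membership of restrictions in $G$ altogether: it argues by contradiction, producing $r$ below a common extension of $q$ and $p'$ with $r\perp\varphi_{M,M'}(p')$, and then uses $r{\restriction} M\leq p'$, the equivariance claim, and compatibility of $r$ with (conditions below) $r{\restriction} {M'}$ to contradict that incompatibility; only compatibility relations are used, never $r{\restriction} {M'}\in G$. Your route can be repaired, but it needs two further facts you neither state nor prove: (i) restriction is monotone, i.e.\ $s\leq q$ implies $s{\restriction} {M'}\leq q{\restriction} {M'}$ (every relevant path through $\hat{\mathcal{M}}_q\cup\mathcal{M}_q$ is one through $\hat{\mathcal{M}}_s\cup\mathcal{M}_s$), and (ii) any condition $s$ with $M_0^\prime\in\mathcal{M}_s$ is compatible with $s{\restriction} {M'}$; together these show every $s\leq q$ is compatible with $q{\restriction} {M'}$, and only then does a density argument place $q{\restriction} {M'}$ in $G$. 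As written, this step is a real gap rather than bookkeeping, whereas your plan for proving the equivariance claim itself (canonicity of the path construction) is reasonable and is essentially what the paper also relies on.
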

\begin{proof}
We can easily check that $p_{M,M'}$ is a condition. For the sake of contradiction suppose that there is a condition $q \leq p_{M, M'}$ and there is $p'$ such that $q \Vdash$``$ \check{p'} \in \dot{G} \cap \check{M} \ \text{but} \ \check{\varphi}_{M, M'}(\check{p'}) \notin \dot{G} \cap \check{M'}$''. Since $q \Vdash \check{p'} \in \dot{G}$, $q$ and $p'$ are compatible, so let $q'$ be a common extension of $q, p'$. If for all $r \leq q'$, there be some $t \leq r$ such that $t \leq \varphi_{M,M'} (p')$, it then follows that the set
$$\{t \in \mathbb{P}: t\Vdash \check{\varphi}_{M, M'}(\check{p'}) \in \dot{G} \}$$
 is dense below $q'$, which is impossible, because it would imply that $q' \Vdash$``$ \check{\varphi}_{M, M'}(\check{p'}) \in \dot{G} \cap \check{M'}$'', which contradicts our assumption. Hence we can pick some $r \leq q'$ such that for all $t \leq r, \neg\left(t \leq \varphi_{M,M'}(p')\right)$ i.e. $r$ is incompatible with $\varphi_{M, M'}(p')$. Now consider the condition $r{\restriction} M \in M$. we have the following easy claim.
\begin{claim}
$\varphi_{M, M'}(r{\restriction} M)=r{\restriction} {M'}$.
\end{claim}
Now since $r \leq p'$ and $p' \in M$, we have $r{\restriction} M \leq p'$. By applying $\varphi_{M, M^\prime}$, we get $r{\restriction} {M'}=\varphi_{M,M'}(r{\restriction} M) \leq \varphi_{M, M'}(p')$ and hence $r \leq \varphi_{M, M'}(p')$. But $r \perp \varphi_{M, M'}(p')$, which is a contradiction.
\end{proof}
The proof of the next lemma is standard, but we present it for completeness.
\begin{lemma}
Forcing with $\mathbb{P}$ preserves the $\CH$.
\end{lemma}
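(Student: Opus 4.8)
The plan is to run the standard strong--genericity argument bounding the continuum of a strongly proper forcing, the new ingredient (needed because $|\mathbb{P}|=\aleph_2$, not $\aleph_1$) being the lemma just proved, which makes the ``trace'' of the generic on a countable model depend only on the isomorphism type of that model. First I would record that every condition of $\mathbb{P}$ lies in $H(\omega_2)$ --- a finite set of countable subsets of $H_{\omega_2}$ together with a finite partial function $\omega_2\to 2$ is an element of $H_{\omega_2}$ --- so $\mathbb{P}\subseteq H(\omega_2)$ and hence $\dot G\cap M=\dot G\cap(M\cap H(\omega_2))$ for every $M$. Fix a large regular $\theta$. For a $\mathbb{P}$-name $\dot r$ for a subset of $\omega$, let $E_{\dot r}$ be the set of $q\in\mathbb{P}$ such that for some countable $M\prec H(\theta)$ with $\dot r,\mathbb{P}\in M$ one has $M\cap H(\omega_2)\in\mathcal{M}_q$ and $q$ is $(M,\mathbb{P})$-strongly generic; by Lemmas \ref{l3}, \ref{l22} and Corollary \ref{l31}, $E_{\dot r}$ is dense in $\mathbb{P}$. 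Since $\mathbb{P}$ preserves $\aleph_1$ (Corollary \ref{c1}), it suffices to prove $(2^{\aleph_0})^{V[G]}\le\aleph_1$.

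Step 1, localization: if $q$ is $(M,\mathbb{P})$-strongly generic then for each $n$ the set $D_n$ of conditions deciding ``$\check n\in\dot r$'' is open dense and lies in $M$, hence $D_n\cap M$ is predense below $q$; so $q\Vdash\dot r=\{n:\exists s\in\dot G\cap M\ (s\Vdash\check n\in\dot r)\}$. Writing $\pi_M\colon M\to\bar M$ for the transitive collapse and setting $\bar{\mathbb{P}}=\pi_M(\mathbb{P})$, $\bar{\dot r}=\pi_M(\dot r)$, $\bar G_M=\pi_M[\dot G\cap M]$, elementarity rewrites this as $q\Vdash\dot r=\{n:\exists\bar s\in\bar G_M\ (\bar M\models\bar s\Vdash_{\bar{\mathbb{P}}}\check n\in\bar{\dot r})\}$. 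Thus below a condition in $E_{\dot r}\cap G$ the real $\dot r^G$ is computed from the triple $(\bar M,\bar{\dot r},\bar G_M)$, with $\bar M$ a countable transitive set of $V$ and $\bar{\dot r}\in\bar M$.

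Step 2, the crux: I claim $\bar G_M$ depends only on $\bar M$. Here I would invoke the lemma just proved. Suppose $M,M'\prec H(\theta)$ are countable, contain $\mathbb{P}$, have the same transitive collapse $\bar M$, and are \emph{active}, meaning $M_0:=M\cap H(\omega_2)$ and $M_0':=M'\cap H(\omega_2)$ both lie in the generic matrix $\mathcal{M}:=\bigcup_{p\in G}\mathcal{M}_p$. Then $M\cong M'$, so $M_0\cong M_0'$ and $\delta_{M_0}=\delta_{M_0'}$. Any condition $s$ with $M_0,M_0'\in\mathcal{M}_s$ automatically has $f_s$ coherent between $M_0$ and $M_0'$ by Definition \ref{d2}(iii), and the argument proving the previous lemma --- run with $s$ in place of the particular condition $p_{M,M'}$ --- shows $s\Vdash\varphi_{M_0,M_0'}[\dot G\cap M]=\dot G\cap M'$. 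Such an $s$ lies in $G$ (pick $p_1,p_2\in G$ with $M_0\in\mathcal{M}_{p_1}$, $M_0'\in\mathcal{M}_{p_2}$, then take a common extension), so $\varphi_{M_0,M_0'}[G\cap M]=G\cap M'$ in $V[G]$. Applying $\pi_{M'}$ and using $\pi_{M'}\circ\varphi_{M_0,M_0'}=\pi_M$ on $M_0$ yields $\bar G_M=\bar G_{M'}$. Hence there is a single function $\Phi\in V[G]$ on transitive collapses of countable models with $\bar G_M=\Phi(\bar M)$ for every active $M$.

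Step 3, counting: in $V[G]$, given $r\in 2^\omega$, fix a name $\dot r\in V$, choose $q\in G\cap E_{\dot r}$ with witness $M\in V$ (which is active, since $M\cap H(\omega_2)\in\mathcal{M}_q\subseteq\mathcal{M}$), and read off from Steps 1--2 that $r=\{n:\exists\bar s\in\Phi(\bar M)\ (\bar M\models\bar s\Vdash_{\pi_M(\mathbb{P})}\check n\in\pi_M(\dot r))\}$. Since $V\models\CH$ there are only $\aleph_1$ countable transitive sets $\bar M$ in $V$, and each has at most countably many elements, hence at most countably many candidate names $\pi_M(\dot r)$; with $\Phi$ fixed this bounds the number of reals of $V[G]$ by $\aleph_1$. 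Together with $(2^\omega)^V\subseteq(2^\omega)^{V[G]}$ and preservation of $\aleph_1$, this gives $(2^{\aleph_0})^{V[G]}=\aleph_1$, i.e.\ $\CH$ holds in $V[G]$. The routine points are the predensity computation in Step 1 and the density of $E_{\dot r}$; the main obstacle is Step 2, specifically verifying that the proof of the previous lemma really applies to an arbitrary condition $s$ with $M_0,M_0'$ in its matrix --- equivalently, that $G$ respects the canonical isomorphism between any two active, isomorphic countable models.
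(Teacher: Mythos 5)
Your proof is correct in substance, but it takes a genuinely different route from the paper. The paper argues by contradiction from a putative sequence $\langle \dot r_\alpha:\alpha<\omega_2\rangle$ of pairwise distinct reals: using $\CH$ in $V$ it finds $\alpha<\beta$ with isomorphic structures $\langle M_\alpha,\in,\mathbb{P},p_\alpha,\dot r_\alpha\rangle\simeq\langle M_\beta,\in,\mathbb{P},p_\beta,\dot r_\beta\rangle$, amalgamates them into the single condition $p_{M_\alpha,M_\beta}$, and shows --- by the same restriction/strong-genericity mechanism you invoke --- that this condition forces $\dot r_\alpha=\dot r_\beta$, contradicting distinctness. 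You instead bound the continuum directly: localization through strongly generic conditions (your Step 1), plus the fact that the trace of $G$ on an active model depends only on its isomorphism type (your Step 2), so that every real of $V[G]$ is read off from a countable transitive set of $V$, a collapsed name, and the single function $\Phi\in V[G]$, and $\CH$ in $V$ gives the $\aleph_1$ count. Your Step 2 is a mild strengthening of the paper's preceding lemma, from the specially built $p_{M,M'}$ to an arbitrary condition whose matrix contains both $M_0$ and $M_0'$; as you say, the proof transfers verbatim, because that argument only uses that every $q$ below the given condition has $M_0,M_0'\in\mathcal{M}_q$, which yields $\varphi_{M_0,M_0'}(q{\restriction}M)=q{\restriction}M'$ and the coherence of $f_q$ via Definition \ref{d2}(iii), together with the amalgamation of Lemma \ref{l22} (indeed the paper's own $\CH$ proof already uses this general form when it writes $\varphi_{M_\alpha,M_\beta}(q{\restriction}M_\alpha)=q{\restriction}M_\beta$ for arbitrary $q\leq p_{M_\alpha,M_\beta}$). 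Comparing the two: the paper's route is shorter, needing the symmetry lemma only for one constructed condition and no localization step; yours gives slightly more, namely an explicit $\aleph_1$-sized family of $V$-parameters from which all reals of $V[G]$ are computable via $\Phi$. Two minor points if you write yours up: the real is determined by the triple $(\bar M,\pi_M(\mathbb{P}),\pi_M(\dot r))$ rather than the pair (harmless for the count), and the identification $\pi_M{\restriction}M_0=\pi_{M_0}$ together with $\mathbb{P}\subseteq H(\omega_2)$ is what justifies treating $\bar G_M$ as a function of $M_0$ alone.
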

\begin{proof}
By contradiction suppose that $\langle r_{\alpha}: \alpha < \omega_2 \rangle$ is a sequence of pairwise distinct reals in $V[G]$, where $G \subseteq \mathbb{P}$
is $V$-generic. Let $p$ be a condition that force this statement. For each $\alpha < \omega_2$ let $p_{\alpha} \leq p$ force ``$\dot{r}_{\alpha} \subseteq \check{\omega}$ is a real''. Fix $\theta$ large enough and regular. For each $\alpha < \omega_2$, let $M_{\alpha}$ be a countable elementary substructure of $H(\theta)$ with $p_{\alpha}, p, \mathbb{P}, \dot{r}_{\alpha} \in M_{\alpha}$.

 By counting arguments, there are $ \alpha < \beta < \omega_2$ such that
 \[
 \langle M_\alpha, \in, \mathbb{P}, p_\alpha, \dot{r}_\alpha \rangle \simeq \langle M_\beta, \in, \mathbb{P}, p_\beta, \dot{r}_\beta \rangle.
 \]
 In particular,
 $\varphi_{M_{\alpha}, M_{\beta}}(\dot{r}_{\alpha})=\dot{r}_{\beta}$ and $\varphi_{M_{\alpha}, M_{\beta}}(p_{\alpha})=p_{\beta}$.
 Set
 \[
 p_{M_{\alpha}, M_{\beta}} = (\mathcal{M}_{p_\alpha} \cup \mathcal{M}_{p_\beta} \cup \{ M_\alpha \cap H(\omega_2), M_\beta \cap H(\omega_2) \}, f_{p_\alpha} \cup f_{p_\beta}).
 \]
 Then $p_{M_{\alpha}, M_{\beta}}$ is a condition which extends both $p_\alpha$ and $p_\beta$.
Note that for all $n<\omega$, for all $p' \in M_{\alpha} \cap \mathbb{P}$ and for all $\xi \in \{0, 1 \}$, $p' \Vdash \dot{r}_{\alpha}(\check{n})= \check{\xi}$ if and only if $ \varphi_{M_{\alpha}, M_{\beta}}(p') \Vdash \dot{r}_{\beta} (\check{n})= \check{\xi}$
\begin{claim}
$p_{M_{\alpha}, M_{\beta}} \Vdash$``$ \dot{r}_{\alpha}=\dot{r}_{\beta}$''.
\end{claim}
\begin{proof}
By contradiction assume that there exist $q \leq p_{M_{\alpha}, M_{\beta}}$ and $n< \omega$ such that $q \Vdash$``$ \dot{r}_{\alpha}(\check{n})=0 \land \dot{r}_{\beta}(\check{n})=1$''. Then by elementarity, there is some $r \in \mathbb{P} \cap M_{\alpha}$ such that $r \leq q{\restriction} {M_{\alpha}}$ and $r \Vdash$``$ \dot{r}_{\alpha}(\check{n})=0$''. We have $\varphi_{M_{\alpha}, M_{\beta}}(q{\restriction} {M_{\alpha}})= q{\restriction} {M_{\beta}}$, so $q{\restriction} {M_{\beta}}$ and $\varphi_{M_{\alpha}, M_{\beta}}(r)$ are compatible. Hence we can conclude that $q|| \varphi_{M_{\alpha}, M_{\beta}}(r)$. But then $\varphi_{M_{\alpha}, M_{\beta}}(r) \Vdash$``$\dot{r}_{\beta}(\check{n})=0$'' which is a contradiction with the fact that $q \Vdash$``$ \dot{r}_{\beta}(\check{n})=1$''.
\end{proof}
We get a contradiction. Thus forcing with $\mathbb{P}$ must preserve the $\CH$
and the lemma follows.
\end{proof}
Now let $G$ be a $\mathbb{P}$-generic filter over $V$ and set
\begin{center}
$A=\{\alpha : \exists p \in G(\alpha \in \dom{f_p} \land f_p(\alpha)=1) \}$.
\end{center}
Then $A$ is a subset of $\omega_2$ of size $\aleph_2$.
\begin{lemma}
Suppose $X \in \mathcal{P}(\omega_2) \cap V$ is a countably infinite set. Then the sets $X \cap A$ and $X \setminus A$ are non-empty.
\end{lemma}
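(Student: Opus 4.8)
The plan is to run a density argument over $\mathbb{P}$. Let $\dot A$ be the canonical name for $A$, so that a condition $q=\langle\mathcal M_q,f_q\rangle$ forces $\check\alpha\in\dot A$ whenever $\alpha\in dom(f_q)$ and $f_q(\alpha)=1$, and forces $\check\alpha\notin\dot A$ whenever $\alpha\in dom(f_q)$ and $f_q(\alpha)=0$ (two conditions in a generic filter are compatible, so their $f$-parts agree wherever both are defined). Hence it suffices to prove that for each $i\in\{0,1\}$ the set
\[
D_i=\{q\in\mathbb P:\exists\alpha\in X\ (\alpha\in dom(f_q)\ \wedge\ f_q(\alpha)=i)\}
\]
is dense in $\mathbb P$: a generic filter then meets $D_1$, producing an element of $X\cap A$, and meets $D_0$, producing an element of $X\setminus A$. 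The two cases are symmetric, so I would treat $D_1$.

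Fix $p=\langle\mathcal M_p,f_p\rangle\in\mathbb P$. Since $X$ is infinite and $dom(f_p)$ is finite, I would pick $\alpha\in X\setminus dom(f_p)$ and let $C$ be the closure of $\{\alpha\}$ under the symmetries of the matrix $\mathcal M_p$, i.e. the $\subseteq$-least set with $\alpha\in C$ such that $\varphi_{M,N}(\beta)\in C$ whenever $M,N\in\mathcal M_p$, $\delta_M=\delta_N$ and $\beta\in C\cap M$. Granting that $C$ is finite (the one real point, discussed below), I first note $C\cap dom(f_p)=\emptyset$: by Definition \ref{d2}(iii) the finite set $dom(f_p)$ is closed under all maps $\varphi_{M,N}$ with $\delta_M=\delta_N$, and, applying (iii) with $M$ and $N$ interchanged, under their inverses; so $dom(f_p)$ is a union of orbits of the equivalence relation generated by these maps, and $C$ is exactly the orbit of $\alpha$, which lies outside $dom(f_p)$. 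Then I would set $q=\langle\mathcal M_p,\ f_p\cup(C\times\{1\})\rangle$. Here $f_q$ is a finite partial function $\omega_2\to 2$ (by finiteness of $C$ and $C\cap dom(f_p)=\emptyset$, so there is no clash of values), and it satisfies Definition \ref{d2}(iii): for $M,N\in\mathcal M_p$ with $\delta_M=\delta_N$ and $\beta\in dom(f_q)\cap M$, either $\beta\in dom(f_p)$, whence $\varphi_{M,N}(\beta)\in dom(f_p)\subseteq dom(f_q)$ with matching value, or $\beta\in C$, whence $\varphi_{M,N}(\beta)\in C\subseteq dom(f_q)$ and both sides have value $1$. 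Thus $q\in\mathbb P$, $q\leq p$, and $\alpha\in dom(f_q)$ with $f_q(\alpha)=1$, so $q\in D_1$; this shows $D_1$, and symmetrically $D_0$, is dense, and the lemma follows.

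The remaining point — and the step I expect to be the main obstacle — is the finiteness of $C$. The key fact I would use is that for $M,N\in\mathcal M_p$ with $\delta_M=\delta_N$ the isomorphism $\varphi_{M,N}$ is the identity on $M\cap N\cap\omega_2$: if $\gamma\in M\cap N\cap\omega_2$, then Lemma \ref{xi} gives $M\cap\gamma=N\cap\gamma$, so $\gamma$ occupies the same place in the increasing enumeration of $M\cap\omega_2$ as in that of $N\cap\omega_2$, forcing $\varphi_{M,N}(\gamma)=\gamma$ (in particular $\varphi_{M,N}$ fixes every ordinal below $\delta_M$). Combined with the composition law $\varphi_{M,N}\circ\varphi_{L,M}=\varphi_{L,N}$ when $\delta_L=\delta_M=\delta_N$, this makes the closure of any finite set under the symmetries of a single level already closed after one application, and of size at most the size of the set times the number of models on that level. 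One then peels off the top level $\mathcal M_p(\max dom(\mathcal M_p))$ — which leaves a condition in $\mathbb P_{\in}^M$ with one fewer level — and proceeds by induction on the number of levels, checking that interleaving the top-level closure with the lower-level ones stabilizes after finitely many rounds. Keeping track of how the symmetries at different levels interact is the delicate part; everything else is routine verification.
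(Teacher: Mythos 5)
Your overall route is the same as the paper's: a density argument in which one extends $f_p$ by a set of new points, closed under the isomorphisms $\varphi_{M,N}$ ($M,N\in\mathcal M_p$, $\delta_M=\delta_N$) and carrying a constant value; splitting into $D_0$ and $D_1$ instead of the paper's single dense set $D_X$ (where one picks $\alpha,\beta$ simultaneously, with $\varphi_{N,N'}(\alpha)\neq\beta$ to keep the two orbits from clashing) is an inessential variant, and your observations that $dom(f_p)$ is a union of orbits and that clause (iii) of Definition \ref{d2} is automatic for a fully closed set are correct. The real difference is that the paper adds only the one-step images $\{\varphi_{N,N'}(\alpha):N\cong N'\in\mathcal M_p,\ \alpha\in N\}$ and treats the resulting pair as evidently a condition, whereas you add the full closure $C$ under arbitrary finite compositions of the $\varphi$'s; that makes (iii) trivial but shifts the entire content of the lemma into the assertion that $C$ is finite.

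That assertion is where your argument has a genuine gap, and you flag it yourself. What you actually establish is only the single-level statement: by Lemma \ref{xi} each $\varphi_{M,N}$ fixes $M\cap N\cap\omega_2$, so one application closes a level and the orbit inside one level is bounded by the number of models on that level. The multi-level statement is not the routine induction you describe: ``peeling off the top level'' does not isolate the difficulty, because a point $\gamma=\varphi_{N,N'}(\alpha)$ produced by a low-level map may lie inside some top-level model $K$ and be moved by $\varphi_{K,K'}$ to a new ordinal which again lies inside a low-level model, and it is exactly the termination of such words alternating between levels that has to be proved; Definition \ref{deftod} by itself gives no coherence between $\varphi_{N,N'}$ and $\varphi_{K,K'}$, so ``interleaving stabilizes after finitely many rounds'' is an assertion, not a verification. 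Until $C$ is shown finite you do not even know that $f_q$ is a finite partial function, i.e.\ that $q\in\mathbb P$, so the density of $D_1$ is not established. The statement is plausible --- in simple two-level configurations one can see it by projecting the orbit into the common transitive collapse of the top-level models, where top-level maps act as the identity on ordinal values by Lemma \ref{xi} --- but carrying this out in general for an arbitrary matrix is the substantive missing lemma; the paper avoids the issue altogether by working only with the one-step images rather than the generated closure.
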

\begin{proof}
 Set
 $D_{X}= \{p \in \mathbb{P}: \exists \alpha, \beta \in X \cap \dom{f_p} \big(f_p(\alpha)=1 \land f_p(\beta)=0 \big) \}.$ It suffices to show that the set $D_X$ is dense,
 since if $p \in G \cap D_X$ and $\alpha, \beta \in X \cap \dom{f_p}$ are such that $f_p(\alpha)=1$ and $f_p(\beta)=0 $, then $\alpha \in X \cap A$ and $\beta \in X \setminus A$.

 To show that $D_X$ is dense, let $p \in \mathbb{P}$ be an arbitrary condition. As $X$ is infinite and $\dom{f_p}$ is finite, we can find $\alpha, \beta \in X \backslash \dom{f_p}$ such that for all $N \cong N'$ in $\mathcal M_p$, $\varphi_{N, N'}(\alpha)\neq \beta$. Set
\[
q=\langle \mathcal{M}_p, f_p \cup \{\langle \varphi_{N,N'}(\alpha), 1 \rangle, \langle \varphi_{N, N'}(\beta), 0 \rangle : N, N' \in \mathcal{M}_p \land N \cong N' \} \rangle.
 \]
 $q$ is easily seen to be a condition. Furthermore, it extends $p$ and belongs to $D_X$, as requested.
\end{proof}
This completes the proof of Theorem \ref{t1}.

\end{document}